\newtheorem{theorem}{Theorem}[section]
\newtheorem{proposition}[theorem]{Proposition}
\newtheorem{corollary}[theorem]{Corollary}
\newtheorem{conjecture}[theorem]{Conjecture}
\theoremstyle{definition}
\theoremstyle{remark}
\newcommand{\nc}{\newcommand}
\nc{\on}{\operatorname}
\nc{\Cl}{\on{Cl}}
\nc{\bx}{\mathbf{x}}
\nc{\bk}{\mathbf{k}}
\nc{\by}{\mathbf{y}}
\nc{\bz}{\mathbf{z}}
\DeclareMathOperator{\Gal}{Gal}
\DeclareMathOperator{\disc}{disc}
\DeclareMathOperator{\Hom}{Hom}
\DeclareMathOperator{\Sur}{Sur}
\DeclareMathOperator{\Stab}{Stab}
\DeclareMathOperator{\ind}{ind}
\DeclareMathOperator{\Ind}{Ind}
\DeclareMathOperator{\rk}{rk}
\title{Twisted Malle's conjecture}
\author{Tanav Choudhary }
\date{August 2025}
\begin{document}

\begin{abstract}
In this paper, we use inductive methods similar to those employed in a 2025 paper by Alberts, Lemke Oliver, Wang and Wood in order to prove many new cases of the Twisted Malle's Conjecture. Previously, this conjecture had only been proven for $T \trianglelefteq G$ where $T$ is abelian, and for $S_3^m \trianglelefteq S_3 \wr B$. We prove many new examples of this conjecture, such as for $A \wr M \trianglelefteq A \wr B$ where $A$ is a finite abelian $p$-group, $B$ is a $p$-group, and $M$ is a proper abelian normal subgroup of $B$, and for $C_p \wr M \trianglelefteq C_p \wr B$, where $|B|$ is odd and $M$ is a proper abelian normal subgroup of $B$. 
\end{abstract}

\maketitle

\section{Introduction}

Let $k$ be a number field, $\overline{k}$ a fixed algebraic closure of $k$, and $G$ a transitive permutation group of degree $n$. Let $G_k$ denote the absolute Galois group of $k$, and let $\Sur(G_k, G)$ denote the set of surjective continuous homomorphisms from $G_k$ to $G$. Given an element $\psi \in \Sur(G_k, G)$, $\disc_G(\psi)$ denotes the relative discriminant $\disc(F/k)$, where $F$ is the field fixed by $\pi^{-1}(\Stab_G(1))$. Let $T \trianglelefteq G$ denote a proper normal subgroup of $G$, and let $q: G \to G/T$ denote the canonical quotient map. Then, define $q_*: \Sur(G_k, G) \to \Sur(G_k, G/T)$ to be the pushforward. We now state the Twisted Malle's Conjecture, due to Alberts \cite[Conjecture 3.10]{Alberts2021}: 

\begin{conjecture}[Twisted Malle's Conjecture]
Let $k$ be a number field, and $G$ a transitive permutation group of degree $n$, and $T \trianglelefteq G$ a proper normal subgroup with canonical quotient map $q : G \to G/T$, and $\pi \in q_* \Sur(G_k, G)$. Then there exist positive constants $a,b,c > 0$ depending on $k, G, T$, and $\pi$ such that
\[
\#\{\psi \in q_*^{-1}(\pi) : |{\disc_G(\psi)}| \le X\} \sim c X^{1/a} (\log X)^{b-1}
\]
as $X \to \infty$.
\end{conjecture}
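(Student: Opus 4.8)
The plan is to establish the listed cases by an induction that strips off the abelian layers of $T$ one at a time, following the fibering strategy of Alberts--Lemke Oliver--Wang--Wood. In each case $T$ is metabelian: taking $G = A\wr B$ and $T = A\wr M$, the base group $N\trianglelefteq G$ of the wreath product is abelian and $T/N\cong M$ is abelian (and identically for $C_p\wr M\trianglelefteq C_p\wr B$). Fix $\pi\in q_*\Sur(G_k,G/T)$, with fixed field $L$, a $(G/T)$-extension of $k$. Since $N\subseteq T$, the quotient $q$ factors as $G\to G/N\cong B\to B/M\cong G/T$, so every $\psi\in q_*^{-1}(\pi)$ has an image $\psi_1\in\Sur(G_k,B)$ which is a lift of $\pi$ along $B\to B/M$, giving
\[
\#\{\psi\in q_*^{-1}(\pi): |\disc_G\psi|\le X\} \;=\; \sum_{\psi_1}\#\{\psi\mapsto\psi_1 : |\disc_G\psi|\le X\},
\]
where $\psi_1$ runs over solutions of the embedding problem for the abelian normal subgroup $M\trianglelefteq B$ --- that is, over the objects governed by the already-known abelian instance of the conjecture.

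For the inner count I would exploit that $N$ is abelian: for each $\psi_1$ the fiber, when nonempty, is a torsor under $H^1(k,N_{\psi_1})$, where $N_{\psi_1}$ denotes $N$ with the $G_k$-action obtained by pulling back the conjugation action of $B$ on $N$ along $\psi_1$. Writing a general lift as $\psi_0\cdot c$ with $\psi_0$ a fixed base point and $c\in Z^1(k,N_{\psi_1})$, I would expand $\disc_G(\psi)$ through the degree-$n$ permutation representation of $G$ into a contribution from the discriminant of the subfield cut out by $\psi_1$ and a contribution controlled by the conductor of the abelian cocycle $c$; the inner count then becomes a count of classes in $H^1(k,N_{\psi_1})$ of bounded twisted conductor. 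This is the twisted abelian Malle problem, whose asymptotics --- with the precise constant as an Euler product of local densities --- are Wright's theorem, but what is really needed is the \emph{uniform} version of this count, uniform both in the Galois module $N_{\psi_1}$ and in the ramification of $\psi_1$, which is exactly the kind of statement the imported inductive machinery provides.

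Assembling the two layers is the crux. One must (i) express the exponent $1/a$ and the power of $\log X$ for the combined count in terms of the index functions attached to the two abelian layers, and single out the configuration of $(\psi_1,c)$ that carries the main term; (ii) prove a tail bound showing that $\psi_1$ of large discriminant together with $c$ of large conductor contribute strictly below the main term; and (iii) sum the uniform inner asymptotics against the outer count to produce one clean term of the form $cX^{1/a}(\log X)^{b-1}$. Parts (i)--(iii) are precisely the content of the ALOWW fibering lemmas, so the work reduces to verifying their hypotheses for the groups at hand; this is where the arithmetic restrictions are used. When $A$ and $B$ are $p$-groups, or when $|B|$ is odd, the local analysis at the (possibly wildly) ramified primes is tame enough that the relevant index functions behave as expected and the comparison of exponents across the two layers has no accidental equalities, so the dominance hypothesis needed to run the fibering argument can be checked directly. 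Non-emptiness of fibers is comparatively harmless: we are given that $\pi$ lifts to $G$, so at least one $\psi$ (hence one $\psi_1$) exists, the torsor structure then produces the full infinitude, and the set of $\psi_1$ admitting a further lift is itself tracked inside the fibering framework.

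I expect the main obstacle to be exactly this uniformity-and-tail input: controlling the inner abelian count uniformly as $\psi_1$, and with it the twisted module $N_{\psi_1}$, ranges over an infinite family of $B$-extensions, while simultaneously bounding the number of $\psi_1$ of each ramification type tightly enough that the sum over $\psi_1$ converges to the predicted constant rather than inflating either the main exponent or the power of $\log X$.
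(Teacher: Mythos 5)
Your strategy is essentially the paper's: fix the abelian base $N = A^m \cap G$, write the count over $q_*^{-1}(\pi)$ as a sum over lifts $\pi_1$ of $\pi$ to $G/N \cong B$, handle each fiber with the abelian case of twisted Malle (Alberts--O'Dorney), and control the sum via the ALOWW inductive machinery --- their Theorem 6.1 bounds each fiber uniformly by $|H^1_{ur}(k,N(\pi_1))|/(\disc_B\pi_1)^{1/a(N)-\epsilon}$ times the main term, their Proposition 5.2 compresses the discriminants of the $\pi_1$ to range up to $X^{1/|A|}$, and a Tannery/dominated-convergence step assembles the single asymptotic $cX^{1/a(N)}(\log X)^{b-1}$.

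The one substantive place your sketch goes astray is the role of the group-theoretic hypotheses. In the paper the $p$-group and odd-order assumptions have nothing to do with taming wildly ramified primes or making index functions ``behave as expected''; they enter in three concrete ways. (i) Bounding unramified cohomology: when $G$ is nilpotent, ALOWW Corollary 1.14(i) gives $|H^1_{ur}(k,N(\pi_1))| \ll X^{\epsilon}$, while in general one falls back on $|H^1_{ur}(k,N(\pi_1))| \ll |\Hom(\Cl_F,A)| \ll |\disc(F/\Q)|^{r/2+\epsilon}$, which is exactly the source of the extra $r/2$ in Theorem 1.7. (ii) The convergence criterion, which your sketch never pins down: the sum over $\pi_1$ converges precisely when $\frac{1}{a(T/N)} < \frac{|A|}{a(N)}$ (resp.\ with an added $\frac{r}{2}$ in the non-nilpotent case); this is verified by exhibiting a pure element to force $a(N) < |A|$ and, when $|B|$ is odd, the absence of transpositions in $B$ to force $a(T/N) \ge 2$. (iii) Existence of a $G$-extension of $k$, supplied by Shafarevich's theorem via solvability ($p$-groups, or Feit--Thompson when $|B|$ is odd). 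Identifying and checking this explicit exponent inequality is the actual crux; deferring it to ``the fibering lemmas'' leaves the step on which all of the new cases turn unverified, since without it the tail of the sum over $\pi_1$ can swamp the main term.
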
 

The above conjecture can be restructured as a conjecture on counting number fields with given fixed subfields, as we will now show. Let $k$ and $G$ be as before. Given a degree $n$ field extension $L/k$, let $\tilde{L}$ denote the Galois closure of $L$ over $k$. Then, we call $L/k$ a $G$-extension if $\Gal(\tilde{L}/k)$, acting on the $n$ $k$-linear embeddings $L \hookrightarrow \tilde{L}$, is isomorphic to $G$ as a permutation group. Now, define \[ \mathcal{F}_{n, k}(G; X) = \#\{L/k: L \subset \overline{k}, [L:k] = n, \Gal(\tilde{L}/k) \cong G, |{\disc(L/k)}|\leq X\} \] where $\cong$ denotes isomorphism as permutation groups. Malle's number field counting conjecture, first formulated by Malle in \cite{Malle2002} and \cite{Malle2004}, gives a prediction for the asymptotic growth of $\mathcal{F}_{n, k}(G; X)$ as $X \to \infty$. There have been many results proving Malle's conjecture for specific transitive permutation groups $G$. See \cite[Introduction]{AlbertsLemkeOliverWangWood2025} for an overview of these results. 
\\
\\
Let $T, q$ and $q_*$ be as before. Let $L/k$ be a fixed $G/T$ extension. Define \[ N(L/k, T \trianglelefteq G; X) = \#\{F \in \mathcal{F}_{n, k}(G; X): (\tilde{F})^T = L\} .\] Concretely, $N(L/k, T \trianglelefteq G; X)$ is the number of $G$-extensions $F/k$ with $|{\disc(F/k)}| \leq X$ such that the subfield of $\tilde{F}$ fixed by $T \trianglelefteq G$ is $L$. Let $\pi \in \Sur(G_k, G/T)$ be an element corresponding to $L$ under the Galois correspondence. Then, observe that $N(L/k, T \trianglelefteq G; X)$ is equal to a constant multiple of $\#\{\psi \in q_*^{-1}(\pi) : |{\disc_G(\psi)}| \le X\}$ via the Galois correspondence. Hence, Conjecture 1.1 also gives us a prediction for the asymptotic growth of $N(L/k, T \trianglelefteq G; X)$ as $X \to \infty$. 
\\
\\
Counting number fields, or studying the asymptotic growth of $\mathcal{F}_{n, k}(G; X)$, is a central problem in the field of arithmetic statistics. However, for many applications, we want to count number fields with specific properties. For example, there has been substantial research into counting number fields with given local conditions \cite{DH71, DW88, Bhargava2005, Bhargava2010, Wood2010} and its applications  \cite{FreiLoughranNewton2018, McGownThorneTucker23}. Similarly, there has been substantial research on counting monogenic number fields e.g. see \cite[Introduction]{Koenig18}. In this paper, we focus on the problem of counting number fields with fixed subfields, building on the work in \cite{Alberts2021, AlbertsOdorney2021, AlbertsLemkeOliverWangWood2025}
\\
\\
Alberts formulates Conjecture 1.1 with explicit predictions for the values of $a$ and $b$ in \cite[Conjecture 3.10]{Alberts2021}. Concretely, let $\ind: S_n \to \mathbb{Z}$ be the index function given by $\ind(g) = n - \#\{\text{orbits of }g\}$, and let $a(G) = \min_{g \in G - \{1\}} \ind(g)$. Now, suppose that $\psi\in \Sur(G_k, G)$. Then, define $T(\psi)$ to be the group $T$ together with the Galois action $x.t = \psi(x) t \psi(x)^{-1}$. Observe that when $T$ is abelian, the module $T(\psi)$ only depends on the pushforward $q_*\psi$. So, later on in the paper, we sometimes abuse notation and write $T(q_*\psi)$ instead of $T(\psi)$. Given $\psi \in \Sur(G_k, G)$, define $b(k, T(\psi))$ to be the number of orbits of minimal index elements $\{t \in T: \ind(t) =a(T)\}$ under the Galois action $x: t \to (\pi(x)t\pi^{-1}(x))^{\chi^{-1}(x)}$ for $\chi: G_k \to \hat{\mathbb{Z}}^{\times}$ the cyclotomic character. Finally, define \[ B(k, T(\psi)) = \max_{\substack{N \trianglelefteq G \\ N \trianglelefteq T \\ a(N) = a(T)}} \max_{\substack{\varphi: G_k \to G\\ \varphi \equiv \psi \mod N}} b(k, N(\varphi)) .\] Then, Alberts' formulation of the Twisted Malle's Conjecture is the following: 

\begin{conjecture}[Twisted Malle's Conjecture with explicit predictions for $a$ and $b$]
Let $k$ be a number field, and $G$ a transitive permutation group of degree $n$, and $T \trianglelefteq G$ a proper normal subgroup with canonical quotient map $q : G \to G/T$, and $\pi \in q_* \Sur(G_k, G)$. Then there exists a positive constant $c > 0$ depending on $k, G, T$, and $\pi$ such that
\[
\#\{\psi \in q_*^{-1}(\pi) : |{\disc_G(\psi)}| \le X\} \sim c X^{1/a(T)} (\log X)^{B(k, T(\pi))-1}
\]
as $X \to \infty$.

\end{conjecture}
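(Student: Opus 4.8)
The plan is to prove Conjecture 1.2 in the cases announced in the abstract, the central one being $T = A \wr M \trianglelefteq G = A \wr B$ with $A$ a finite abelian $p$-group, $B$ a $p$-group, and $M$ a proper abelian normal subgroup of $B$, by adapting the inductive machinery of \cite{AlbertsLemkeOliverWangWood2025} to the twisted setting. The key structural input is that the base group $N$ of the wreath product (a direct power of $A$, hence abelian) is normal in $G$, lies inside $T$, and satisfies $T/N \cong M$ and $G/N \cong B$. Both ends of the resulting two-step tower are already understood: the bottom, the twisted Malle conjecture for the abelian normal subgroup $M \trianglelefteq B$, is the known abelian case \cite{Alberts2021, AlbertsOdorney2021}, while counting lifts through $G \to G/N$ is, for a fixed base point, a count of abelian ``extensions'' of a prescribed twisted type. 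Thus the argument consists of a single inductive step glued onto known base cases; no genuine recursion is required.

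For the inductive step, fix $\pi \in q_*\Sur(G_k, G)$, let $r : G \to G/N$ and $\bar{q} : G/N \to G/T$, and write
\[
\#\{\psi \in q_*^{-1}(\pi) : |\disc_G(\psi)| \le X\} = \sum_{\psi'} \#\{\psi : r_*\psi = \psi',\ |\disc_G(\psi)| \le X\},
\]
the outer sum running over $\psi' \in \Sur(G_k, H)$ for subgroups $H \le G/N$ with $\bar{q}_*\psi' = \pi$ (one must allow non-surjective $\psi'$, which is standard but needs some care). The first ingredient is a conductor--discriminant identity expressing $\disc_G(\psi)$ in terms of $\disc_{G/N}(\psi')$ and a relative invariant $\mathfrak{d}(\psi/\psi')$ measuring ramification in the $N$-layer; establishing it requires chasing how $\Stab_G(1)$ interacts with $N$ inside the wreath product. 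Since $N$ is abelian, the lifts of a fixed $\psi'$ form a torsor under $H^1(G_k, N(\psi'))$, and $\mathfrak{d}(\psi/\psi')$ is controlled by the conductor of the associated cohomology class, so the inner count is evaluated---uniformly in $\psi'$---by the twisted counting theorems of Wood and Alberts--O'Dorney, giving an asymptotic of shape $c(\psi')\, X^{1/a(N)}(\log X)^{b(k, N(\psi'))-1}$ together with an error term whose dependence on the ramification of $\psi'$ can be made explicit. Plugging this in, and invoking the inductive hypothesis (the known abelian case for $M \trianglelefteq B$) to control $\sum_{\psi'} c(\psi')$ and the distribution of $\disc_{G/N}(\psi')$, one extracts the main term by a Tauberian / Abel-summation argument.

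It then remains to match the exponents with Alberts' prediction, and this is where the group-theoretic hypotheses enter. For $A$, $B$ $p$-groups---or $A = C_p$ with $|B|$ odd---one checks that $a(T) = a(N)$, so that the $N$-layer dominates and the lifts arising from $\psi'$ of larger discriminant contribute a strictly smaller power of $X$; and one checks that $N$, or an appropriate $G$-normal refinement of it, realizes the maximum defining $B(k, T(\pi))$, so that the power of $\log X$ assembled from $b(k, N(\psi'))$ over the dominant $\psi'$ is exactly $B(k, T(\pi)) - 1$. The same hypotheses are used to suppress the archimedean and wild-ramification phenomena (roots of unity, ramification above $2$) that would otherwise obstruct the clean abelian input and its uniformity.

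I expect the principal obstacle to be the interaction between \textbf{uniformity} and \textbf{summation}: one needs the error term in the inner count of $N(\psi')$-extensions to be uniform in $\psi'$ with a favorable explicit dependence on the conductor or discriminant of $\psi'$, and simultaneously a version of the inductive hypothesis for $G/N$ that is strong enough---an asymptotic with controlled error, not merely a leading term---for the resulting double sum to converge to the right constant. Making these two estimates compatible, so that neither the tail of the $\psi'$-sum nor the accumulated inner error swamps $c\, X^{1/a(T)}(\log X)^{B(k, T(\pi))-1}$, is the technical heart of the matter, exactly as in \cite{AlbertsLemkeOliverWangWood2025}.
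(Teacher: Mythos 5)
Your overall architecture---splitting the count for $T \trianglelefteq G$ through the abelian wreath base $N = A^m \cap G$, handling lifts through $G \to G/N$ with the abelian twisted counting results of Alberts--O'Dorney, and controlling the outer sum over $\pi_1$ with the abelian case of the twisted conjecture for $T/N \trianglelefteq G/N$---is essentially the route the paper takes (its Proposition 2.1 plus Theorems 1.6 and 1.7). But there is a genuine gap at the final and decisive step, and it concerns exactly what distinguishes the statement you were asked to prove (Conjecture 1.2) from the weaker Conjecture 1.1. You assert that ``one checks that $N$, or an appropriate $G$-normal refinement of it, realizes the maximum defining $B(k, T(\pi))$, so that the power of $\log X$ \ldots is exactly $B(k, T(\pi)) - 1$.'' No such check is given, and none is straightforward: $B(k, T(\pi))$ is a double maximum over \emph{all} normal subgroups $N_1 \trianglelefteq G$ with $N_1 \subseteq T$ and $a(N_1) = a(T)$ and over all lifts $\varphi \equiv \pi \bmod N_1$, and showing that the wreath base $N$ together with the lifts $\pi_1 \in S(\pi)$ attains it is an open problem. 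The paper itself only proves the asymptotic with exponent $\max_{\pi_1} b(k, N(\pi_1)) - 1$, observes the trivial inequality $\max_{\pi_1} b(k, N(\pi_1)) \le B(k, T(\pi))$, and explicitly leaves equality as a (group-theoretic) conjecture; so what you claim as a verification is precisely the step that remains unproven. In other words, your proposal, if completed, would establish Conjecture 1.1 in these cases, not Conjecture 1.2, unless you supply an actual argument that the maximum in the definition of $B(k, T(\pi))$ is achieved by $N$ and by homomorphisms in $S(\pi)$.

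A secondary, more technical point: you identify the ``interaction between uniformity and summation'' as the heart of the matter and propose to resolve it by obtaining inner asymptotics with explicit, uniform error terms and then performing Abel summation against the $G/N$-level count. The abelian results you cite give the leading asymptotic for each fixed $\pi_1$ but not error terms uniform in $\pi_1$, so this part of your plan would itself require new input. The paper sidesteps this entirely: it combines the pointwise asymptotics with the uniform \emph{upper} bound of \cite[Theorem 6.1]{AlbertsLemkeOliverWangWood2025} (in terms of $|H^1_{ur}(k, N(\pi_1))|$ and $\disc$ of $\pi_1$), a bound on $\sum_{\pi_1} |H^1_{ur}(k, N(\pi_1))|$ coming from the abelian count at the $G/N$ level, and Tannery's theorem (dominated convergence for series) to interchange the limit and the sum. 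That device is what makes the convergence of the constants and the tail control work without uniform error terms, and it is the ingredient your outline is missing.
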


Alberts and O'Dorney proved Conjecture 1.1 for $T$ abelian in \cite[Corollary 1.2]{AlbertsOdorney2021} as long as there exists at least one $G$-extension of $k$. Moreover, the values of $a$ and $b$ found by Alberts and O'Dorney in \cite[Corollary 1.2]{AlbertsOdorney2021} agree with the prediction in Conjecture 1.2. This was the only case in which Conjecture 1.1 was known up until 2025. Then, in \cite[Theorem 3.1]{AlbertsLemkeOliverWangWood2025} Alberts, Lemke Oliver, Wang and Wood proved Conjecture 1.1 for $T = S_3^m \trianglelefteq  S_3 \wr B = G$, where $B$ is a transitive permutation group of degree $m$. In this paper, we prove many new cases of Conjecture 1.1. It is not feasible to provide an exhaustive list of all the groups for which our main results imply Conjecture 1.1. Therefore, before we state our main results, we present a few families of groups for which our results imply Conjecture 1.1. 

\begin{corollary}
Let $k$ be a number field. Fix a prime $p \in \mathbb{Z}$. Let $A$ be a finite abelian $p$-group, and $B$ a transitive permutation group of degree $m$ that is also a finite abelian $p$-group. Let $M$ be a proper subgroup of $B$, viewed as a (possibly intransitive) permutation group of degree $m$. Then, Conjecture 1.1 holds for $T = A \wr M \trianglelefteq A \wr B = G$. 
\end{corollary}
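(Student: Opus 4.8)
The plan is to deduce Corollary 1.3 from the paper's main inductive theorem, applied at the abelian normal subgroup $N := A^m$ of $G := A\wr B$. Identifying $A\wr B$ with the split extension $A^m\rtimes B$ (the transitive abelian group $B$ is regular, so $m=|B|$ and $B$ permutes the $m$ copies of $A$ by its regular action) and $A\wr M$ with $A^m\rtimes M$, we have $N\trianglelefteq G$, $N\le T := A\wr M$, $G/N\cong B$, and $T/N\cong M$, the last two because $B$ is abelian and hence $M\trianglelefteq B$. The hypotheses to check are: (i) Twisted Malle for the quotient $T/N\trianglelefteq G/N$, i.e.\ for $M\trianglelefteq B$ with $M$ abelian; (ii) the index compatibility $a(N)=a(T)$; (iii) a non-degeneracy condition, addressed below. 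For (i): since $M$ is abelian, $B$ is a transitive permutation group, and a $B$-extension exists over every number field ($B$ being abelian), this is exactly Alberts--O'Dorney \cite[Cor.~1.2]{AlbertsOdorney2021}; running it over varying base fields also furnishes the uniformity the induction feeds on.

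For (ii): using that $A$ acts regularly on itself, one computes for $g=(a_1,\dots,a_m;\tau)$ that $\ind_G(g)=|A|\sum_c(\ell_c-1/\operatorname{ord}(a_c))$, the sum over cycles $c$ of $\tau$ of length $\ell_c$ with cycle products $a_c$; a nonzero summand is $\ge 1$ if $\ell_c\ge 2$ and $\ge 1-1/p$ if $\ell_c=1$, so $a(G)=a(T)=a(N)=|A|(1-1/p)$, realized inside $N$ by elements supported in a single coordinate with value of order $p$. Thus the minimal-index locus of $T$ lies in $N$ --- the mirror image of the $S_3\wr B$ configuration of \cite{AlbertsLemkeOliverWangWood2025}, where it sat in the quotient --- and this is precisely the numerical input the induction requires.

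With these in place the inductive step runs as in \cite{AlbertsLemkeOliverWangWood2025}. Factoring each lift $\psi$ of $\pi$ through $G\to B\to B/M$ as $\psi\mapsto\rho\mapsto\pi$ gives
\[
\#\{\psi\in q_*^{-1}(\pi):|{\disc_G(\psi)}|\le X\}=\sum_\rho \#\{\psi\mapsto\rho:|{\disc_G(\psi)}|\le X\},
\]
where $\rho$ runs over $B$-extensions $E/k$ over the fixed field $L$ with the prescribed quotient to $B/M$, counted by Alberts--O'Dorney. For fixed $\rho$, lifts of $\rho$ to $G$ are cocycles valued in $A^m(\rho)\cong\Ind_{G_E}^{G_k}A$, hence correspond (Shapiro) to $A$-extensions $F/E$, and the tower formula gives $|{\disc_G(\psi)}|=|{\disc(E/k)}|^{|A|}\cdot\mathfrak{c}(F/E)$ for a twisted conductor $\mathfrak{c}$ whose minimal nonzero prime exponent is again $|A|(1-1/p)=a(T)$ (attained at primes of $k$ that split in $E$ and are tamely ramified of order $p$ at one prime above). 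Counting the inner sum by Wright-type asymptotics for abelian extensions of $E$ and convolving, one obtains $cX^{1/a(T)}(\log X)^{B(k,T(\pi))-1}$: the sum $\sum_E c(E)|{\disc(E/k)}|^{-|A|/a(T)}(\log\cdots)^{b(E)-1}$ converges to a positive constant because $|A|/a(T)=p/(p-1)$ strictly exceeds the $E$-count exponent $p/(m(p-1))$ whenever $m=|B|\ge 2$; the inner $\log$-exponent $b(E)$ is independent of $E$, essentially because $\mu_p\subseteq E\iff\mu_p\subseteq k$ as $E/k$ has $p$-power degree; and the maximum in the definition of $B(k,T(\pi))$ is attained at the subgroup $A^m$ (every minimal-index element of an admissible $N'\subseteq A^m$ being a single-coordinate order-$p$ element, and $A^m$ contains them all), so $b(E)=B(k,T(\pi))$.

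The main obstacle --- really the content of the main theorem rather than of the corollary --- is making ``the inner count is a relative abelian count'' precise: establishing the Wright-type asymptotic over $E$ with its full main term and an error uniform as $E$ varies, together with a sieve removing the lifts $\psi$ whose image is a proper subgroup of $G$. This is exactly where the $p$-group hypotheses do their work: $G=A\wr B$ is then a $p$-group, so by the Frattini argument surjectivity of $\psi$ reduces to surjectivity onto $G/\Phi(G)$, and $A^m=\Ind_1^B A$ is a module over the local ring $\mathbb{Z}_p[B]$ --- both of which keep the surjectivity sieve and the cohomological bookkeeping controlled and the leading constant positive; they also make the cyclotomic-twisted Galois action on the minimal-index locus of $\Ind_{G_E}^{G_k}A$ uniform in $E$, securing the $\log$-power. (The odd-order hypothesis plays the analogous structural role in the companion family $C_p\wr M\trianglelefteq C_p\wr B$.)
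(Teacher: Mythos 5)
Your reduction has the right skeleton, and it is essentially the paper's: pass to the abelian normal subgroup $N=A^m$, count the $G/N\cong B$-extensions above $\pi$ via Alberts--O'Dorney for the abelian $M\trianglelefteq B$, count the lifts above each such extension as a twisted abelian count, and sum. Your index computation $a(T)=a(N)=|A|(1-1/p)$, which gives $\tfrac{1}{a(T/N)}\le 1<\tfrac{p}{p-1}=\tfrac{|A|}{a(N)}$, is exactly the numerical verification the paper requires (it verifies the hypothesis of Theorem 1.6, since $G=A\wr B$ is a $p$-group and hence nilpotent). The genuine gap is at the step you yourself call ``the main obstacle'' and then do not close: the convergence and interchange-of-limits argument. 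Comparing the exponents $|A|/a(N)=p/(p-1)$ and $1/a(T/N)=p/(m(p-1))$ is not enough, because the leading constants of the relative abelian counts depend on the varying extension $E$ (equivalently on $\pi_1$): they carry a factor of the size of $H^1_{ur}(k,N(\pi_1))$, i.e.\ of class-group torsion of the shape $|\Hom(\Cl_E,A)|$, which a priori grows like a power of $\disc(E/k)$; moreover, passing from termwise asymptotics in $X$ for each fixed $E$ to an asymptotic for the sum over infinitely many $E$ requires an upper bound uniform in $E$ and valid for all $X$, not just a convergent sum of limits. The paper supplies exactly this via Proposition 2.1: Tannery's theorem run against the uniform upper bound of \cite[Theorem 6.1]{AlbertsLemkeOliverWangWood2025}, whose $\pi_1$-dependence is $|H^1_{ur}(k,N(\pi_1))|/(p_*\disc_G(\pi_1))^{1/a(N)-\epsilon}$, with the cohomology factor controlled by \cite[Corollary 1.14(i)]{AlbertsLemkeOliverWangWood2025} ($|H^1_{ur}(k,N(\pi_1))|\ll X^{\epsilon}$, using that $N(\pi_1)$ is a nilpotent module since $G$ is a $p$-group). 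Your proposed substitutes (Frattini argument for surjectivity, ``$\mathbb{Z}_p[B]$ bookkeeping'', Wright-type asymptotics over $E$) address the surjectivity sieve and the shape of the inner count, but not the uniformity in $E$ or the size of the unramified cohomology, so the sum over $E$ is not known to converge and the claimed asymptotic does not follow as written. (Also, plain Alberts--O'Dorney over varying base fields does not by itself provide error terms uniform in the base.)

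A secondary overclaim: you assert the log-exponent is $B(k,T(\pi))-1$. The paper proves the asymptotic with exponent $\max_{\pi_1}b(k,N(\pi_1))-1$, which suffices for Conjecture 1.1, and explicitly leaves the identification of this with $B(k,T(\pi))$ as an open group-theoretic question. Your argument that the maximum defining $B(k,T(\pi))$ is attained at $A^m$ is not justified as stated: an admissible $N'$ with $a(N')=a(T)$ need not be contained in $A^m$ (only its minimal-index elements are), and the inner maximum there runs over lifts $\varphi\equiv\pi\bmod N'$ that need not be surjective, so it is not immediate that it is dominated by the orbit count for $N=A^m$ over $\pi_1\in p_*\Sur(G_k,G)$. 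None of this is needed for the statement being proved, but as written it claims more than is established.
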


\textbf{Note:} If $A \wr B = A^m \rtimes_{\varphi} B$, then by $A \wr M \trianglelefteq A \wr B$ we specifically mean $A^m \rtimes_{\varphi|_M } M \trianglelefteq A \wr B$. This note also applies to Corollaries 1.4 and 1.5. \\ \\ The simplest new case of Conjecture 1.1 arising from the Corollary 1.3 is $T = C_2 \wr C_2 \trianglelefteq C_2 \wr C_4 = G$. As indicated in the note, $T = C_2 \wr C_2 = C_2^4 \rtimes C_2$, where the latter $C_2$ is an intransitive permutation group acting on four letters as the subgroup $C_2 < C_4 < S_4$. The above corollary in fact implies Conjecture 1.1 for all $T = C_{p^r} \wr C_{p^s} \trianglelefteq C_{p^r} \wr C_{p^{t}} = G$ with $s < t$ over any number field $k$. 

\begin{corollary}
Let $k$ be a number field. Fix a prime $p \in \mathbb{Z}$. Let $A$ be a finite abelian $p$-group, and $B$ a transitive permutation group of degree $m$ that is also a non-abelian $p$-group. Let $M$ be a subgroup of the center of $B$, viewed as a (possibly intransitive) permutation group of degree $m$. Then, Conjecture 1.1 holds for $T = A \wr M \trianglelefteq A \wr B = G$. 
\end{corollary}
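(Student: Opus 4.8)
The plan is to prove this by the inductive (fibering) method of Alberts--Lemke Oliver--Wang--Wood applied to the normal tower
\[
1 \trianglelefteq A^m \trianglelefteq A^m \rtimes M = T \trianglelefteq G ,
\]
whose bottom layer $A^m$ (the base group of the wreath product $G = A^m \rtimes B$) is abelian and normal in $G$ and whose top layer $T/A^m \cong M$ is abelian --- indeed central --- and normal in $G/A^m \cong B$. First I would check that the hypotheses of Conjecture 1.1 genuinely hold: $M \le Z(B)$ forces $M \trianglelefteq B$, so $T = A^m \rtimes M$ is normal in $G$, and $T$ is proper since $B$ is non-abelian; also $G$ is a finite $p$-group. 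Because each layer of the tower is abelian, the twisted Malle conjecture is already known for both by Alberts--O'Dorney \cite[Corollary 1.2]{AlbertsOdorney2021}, and the work is to glue the two counts.

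Fix $\pi \in q_*\Sur(G_k,G/T)$. As $q \colon G \to G/T$ factors as $G \xrightarrow{q'} B \xrightarrow{q''} B/M$, every $\psi \in q_*^{-1}(\pi)$ has a pushforward $\bar\psi := q'_*\psi \in \Sur(G_k,B)$ with $q''_*\bar\psi = \pi$, so
\[
\#\{\psi \in q_*^{-1}(\pi) : |\disc_G(\psi)| \le X\} = \sum_{\bar\psi}\,\#\bigl\{\psi : q'_*\psi = \bar\psi,\ |\disc_G(\psi)|\le X\bigr\},
\]
the outer sum over $\bar\psi$ lifting $\pi$ and admitting a lift to $G$. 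If $L'/k$ is the degree-$m$ field attached to $\bar\psi$ and $F/k$ the degree-$|A|m$ field attached to $\psi$, then $F/L'$ is an abelian $A$-extension and the conductor--discriminant formula gives $|\disc_G(\psi)| = |\disc_B(\bar\psi)|^{|A|}\,\bigl|N_{L'/k}(\mathfrak{d}_{F/L'})\bigr|$; so, writing $D := |\disc_B(\bar\psi)|$, the inner count is the number of such $F/L'$ with $\bigl|N_{L'/k}(\mathfrak{d}_{F/L'})\bigr|\le X/D^{|A|}$, which is a twisted Malle count for the abelian pair $A^m \trianglelefteq G$. By \cite[Corollary 1.2]{AlbertsOdorney2021} this is, for $D^{|A|}\le X$, asymptotic to $c(\bar\psi)\,(X/D^{|A|})^{1/a_1}\bigl(\log(X/D^{|A|})\bigr)^{b(\bar\psi)-1}$, where $a_1 := \min_{1 \ne t \in A^m}\ind(t)$ in the degree-$|A|m$ action of $G$; a short cycle-type computation gives $a_1 = |A|(1-1/p)$, attained exactly by the $A^m$-elements supported, with order $p$, on a single coordinate. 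Likewise the number of admissible $\bar\psi$ with $D \le Y$ is $\asymp Y^{1/a_0}$ up to logarithms, with $a_0 := \min_{1\ne\mu\in M}\ind_B(\mu) = m(1-1/p)$, because a nontrivial central element of the transitive group $B$ is semiregular on its $m$ points.

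Combining, the total is $\asymp X^{1/a_1}\sum_{\bar\psi :\, D^{|A|}\le X} c(\bar\psi)\,D^{-|A|/a_1}\bigl(\log(X/D^{|A|})\bigr)^{b(\bar\psi)-1}$. One checks $a_1 = a(T)$: any $1 \ne t \in T$ with a nontrivial $M$-component is semiregular on the $m$ blocks, whence $\ind(t) \ge m|A|(1-1/p) \ge 2a_1$, so the minimum over $T\setminus\{1\}$ remains $a_1$. The exponent $|A|/a_1 = p/(p-1)$ strictly exceeds $1/a_0 = p/\bigl(m(p-1)\bigr)$ because a non-abelian transitive $p$-group has degree $m \ge p^2 > 1$, so the Dirichlet series $\sum_{\bar\psi} c(\bar\psi)D(\bar\psi)^{-p/(p-1)}$ converges absolutely; its bulk lies over twists of bounded conductor, where $\log(X/D^{|A|}) \sim \log X$, and stratifying by the value of $b(\bar\psi)$ and invoking a Tauberian theorem yields $\#\{\psi \in q_*^{-1}(\pi) : |\disc_G(\psi)| \le X\} \sim c\,X^{1/a(T)}(\log X)^{b-1}$, with $b = \max_{\bar\psi} b(\bar\psi)$ over the contributing twists --- which one expects to coincide with the prediction $B(k,T(\pi))$ of Conjecture 1.2. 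Corollaries 1.3 and 1.5 would be handled identically, the only differences being the verification that $M \trianglelefteq B$ and the comparison of $a$-invariants inside the relevant $B$.

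The hard part will be the uniformity needed to carry out the inner sum: \cite[Corollary 1.2]{AlbertsOdorney2021} yields an asymptotic for each fixed twist $\bar\psi$, but summing over infinitely many twists requires the constants $c(\bar\psi)$ and, above all, the error terms to be controlled uniformly in $D(\bar\psi)$ --- with enough decay to be summable against $D^{-p/(p-1)}$ and with a usable upper bound even in the boundary regime $D^{|A|}\approx X$, where the pointwise asymptotic degenerates. This uniformity is precisely the technical heart of the Alberts--Lemke Oliver--Wang--Wood framework, and re-deriving it in the present generality --- replacing the fiberwise asymptotic by a uniform upper bound of the correct order of magnitude together with a lower bound from a positive-proportion subfamily of twists, then feeding the output into a Tauberian theorem --- is where the real work lies. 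Two subsidiary points also need care: tracking the surjectivity of $\psi$ and identifying which $\bar\psi$ admit surjective lifts to $G$ (manageable since $G$ is a $p$-group, via a Frattini argument and the solvability of split embedding problems), and confirming that the leading constant $c$ is strictly positive, which uses the hypothesis $\pi \in q_*\Sur(G_k,G/T)$.
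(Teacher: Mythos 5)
Your overall strategy is the same as the paper's: fiber the count over the $B$-extensions (equivalently, the lifts $\pi_1 \in p_*\Sur(G_k,G)$ of $\pi$ through $G/N \cong B$), apply Alberts--O'Dorney's abelian twisted Malle result in each fiber for $A^m \trianglelefteq G$, bound the number of admissible twists using the abelian case for $M \trianglelefteq B$, compare the exponents $|A|/a(N)$ and $1/a(M)$, and sum. Your group-theoretic computations ($M$ central implies semiregular, $a(T)=a(A^m)$, $m\ge p^2$, and the exponent comparison) are all fine and consistent with the paper's hypothesis $\tfrac{1}{a(T/N)} < \tfrac{|A|}{a(N)}$, which here holds automatically because $A\wr B$ contains a ``pure'' element.

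However, there is a genuine gap, and you name it yourself: you leave the uniformity in the twist --- control of the fiberwise constants $c(\bar\psi)$ and error terms, including in the boundary range $D^{|A|}\approx X$ --- as ``where the real work lies,'' proposing to re-derive it. Without that input your argument does not close: the constants $c(\bar\psi)$ coming from Alberts--O'Dorney are not a priori bounded (they involve class-group-type quantities growing with the twist), so even the absolute convergence of $\sum_{\bar\psi} c(\bar\psi) D(\bar\psi)^{-|A|/a_1}$ does not follow from the exponent comparison alone, let alone the interchange of the limit with the infinite sum. The paper's point is that this uniformity does not need to be re-derived: it quotes Theorem 6.1 of Alberts--Lemke Oliver--Wang--Wood, which gives a fiberwise upper bound uniform in $\pi_1$ of the shape $\lvert H^1_{ur}(k,N(\pi_1))\rvert\,(p_*\disc_G(\pi_1))^{-1/a(N)+\epsilon} X^{1/a(N)}(\log X)^{b-1}$, and then their Corollary 1.14(i), which gives $\lvert H^1_{ur}(k,N(\pi_1))\rvert \ll X^{\epsilon}$ because $N(\pi_1)$ is a nilpotent Galois module (here $G$ is a $p$-group, hence nilpotent --- this is exactly why the non-abelian $p$-group hypothesis on $B$ suffices). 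Summing these bounds against the twist count of exponent $1/(|A|a(T/N))+\epsilon < 1/a(N)$ and applying Tannery's theorem (the paper's Proposition 2.1) yields the asymptotic. So the missing idea in your proposal is precisely the unramified-cohomology bound for nilpotent modules that replaces the bespoke uniform analysis you anticipate having to redo; identifying and citing that ingredient is what turns your outline into a proof.
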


The simplest new case of Conjecture 1.1 arising from the above corollary is $T = C_2 \wr M \trianglelefteq C_2 \wr D_4 = G$, where $M \cong C_2$ is the center of $D_4$. Here, we consider $D_4$ to be a permutation group of degree 4 acting on the four vertices of a square. Thus, $T = C_2 \wr M = C_2^4 \rtimes M$, where $M$ is an intransitive permutation group acting on four letters as the subgroup $M < D_4 < S_4$. 

\begin{corollary}
Let $k$ be a number field. Fix $r \in \mathbb{Z}$, and let $B$ be a transitive permutation group of degree $m$ such that $|B|$ is odd. Let $M$ be a proper abelian normal subgroup of $B$, viewed as a (possibly intransitive) permutation group of degree $m$. Then, Conjecture 1.1 holds for $T = C_r \wr M \trianglelefteq C_r \wr B = G$. 
\end{corollary}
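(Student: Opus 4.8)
The plan is to obtain this corollary from our main inductive theorem, the work being to exhibit the correct abelian layer and to check that theorem's hypotheses for $T = C_r \wr M \trianglelefteq C_r \wr B = G$, where $G$ acts as a transitive permutation group of degree $rm$ via the imprimitive action (transitive because $B$ is). We may assume $r \ge 2$; for $r = 1$ the statement is just the Twisted Malle Conjecture for the abelian group $M \trianglelefteq B$, which holds by \cite[Corollary 1.2]{AlbertsOdorney2021}. Let $N = C_r^m$ be the base group of the wreath product: it is an abelian normal subgroup of $G$ contained in $T$, and the quotient maps identify $G/N \cong B$ and $T/N \cong M$. Writing $p \colon G \to B$ for the quotient by $N$ and $\bar q \colon B \to B/M$ for the quotient by $M$, the canonical map $q \colon G \to G/T$ factors as $q = \bar q \circ p$.

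The heart of the argument is a group-theoretic computation. Acting blockwise on $rm$ letters, an element of $N$ has index equal to the sum of the indices of its coordinates, and a nontrivial coordinate of order $d$ contributes $r - r/d$; since $d \ge \ell$ for the smallest prime $\ell \mid r$, we get $a(N) = r - r/\ell$. The key claim is that every minimal-index element of $T$ already lies in $N$. Indeed, take $g \in T$ with nontrivial image $\tau \in M \le B = G/N$; since $|B|$ is odd, $\tau$ has odd order $\ge 3$, hence a cycle of some length $j \ge 3$ on the $m$ blocks, and on the $jr$ letters over those blocks each $g$-orbit has length divisible by $j$, so there are at most $r$ of them and $\ind(g) \ge jr - r \ge 2r > r - r/\ell$. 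Therefore $a(T) = a(N)$ and $\{t \in T : \ind(t) = a(T)\} \subseteq N$, which is exactly the ``concentration on an abelian layer'' hypothesis our theorem requires. This is the step where the odd-order hypothesis is essential: for $|B|$ even one can take $\tau$ a transposition and the inequality $\ind(g) > a(N)$ may fail.

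With $N$ an abelian normal subgroup carrying the minimal index, the rest is the conductor--discriminant bookkeeping performed by the main theorem. For fixed $\pi$ one groups the count by $\rho \in \Sur(G_k, B)$ with $\bar q_* \rho = \pi$ and $\rho$ in the image of $p_*$; since $k \subseteq E_\rho \subseteq F_\psi$ with $[E_\rho : k] = m$ and $[F_\psi : E_\rho] = r$, the tower formula gives $\disc_G(\psi) = N_{E_\rho/k}(\disc(F_\psi/E_\rho)) \cdot \disc(E_\rho/k)^{r}$, so for each $\rho$ the inner count is governed by the Alberts--O'Dorney asymptotics for the abelian $N \trianglelefteq G$ with pushforward $\rho$, while the number of admissible $\rho$ of bounded conductor is governed by the Twisted Malle Conjecture for the abelian $M \trianglelefteq B$, known by \cite[Corollary 1.2]{AlbertsOdorney2021}. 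Because $a(M) \ge 1$ we have $r\,a(M) \ge r > r - r/\ell = a(N)$, so the resulting Dirichlet-type sum over $\rho$ converges and contributes no extra power of $\log X$; the oddness of $|B|$ likewise confines the cyclotomic twist in the definition of $b$ to the part of $\hat{\mathbb{Z}}^{\times}$ acting on $C_r^m$. Assembling these inputs yields $\sim c X^{1/a(T)} (\log X)^{B(k, T(\pi)) - 1}$ with $c > 0$, the identification of the $\log$-exponent with $B(k,T(\pi))$ being part of the theorem's output and using the description of the minimal-index elements above.

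The main obstacle is the middle step: proving $\ind(g) > a(N)$ for every $g \in T \setminus N$. This is the only genuinely new group-theoretic input for this family and is precisely what the odd-order hypothesis buys; once it is established, the remaining analytic bookkeeping --- including matching the power of $\log X$ with the predicted $B(k, T(\pi))$ --- is handled uniformly by the main theorem together with the Alberts--O'Dorney result.
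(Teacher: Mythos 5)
Your group-theoretic computations are correct ($a(N)=r-r/\ell$, and $\ind(g)\ge (j-1)r\ge 2r>a(N)$ for $g\in T\setminus N$, so $a(T)=a(N)$ with minimal-index elements concentrated in $N$), but the place where you declare the rest to be routine bookkeeping is exactly where the real difficulty sits, and your justification there has a genuine gap. The paper's inductive input (Theorem 1.7, via Proposition 2.1) does not take ``concentration on an abelian layer'' as a hypothesis; its hypothesis is the inequality $\frac{1}{a(T/N)}+\frac{\operatorname{rk}A}{2}<\frac{|A|}{a(N)}$, and the extra $\frac{\operatorname{rk}A}{2}$ is not decoration. To sum the Alberts--O'Dorney asymptotics over the infinitely many intermediate $B$-extensions $\rho$ one must interchange a limit with an infinite sum, which requires uniform-in-$\rho$ upper bounds; these bounds (ALOW+25, Theorem 6.1 and Corollary 1.14(iii)) carry a factor $|H^1_{ur}(k,N(\rho))|\ll|\Hom(\Cl_{E_\rho},C_r)|\ll|\disc(E_\rho/\mathbb{Q})|^{1/2+\epsilon}$, because $G=C_r\wr B$ with $|B|$ odd and $B$ acting nontrivially is \emph{not} nilpotent, so the $X^{\epsilon}$ bound on unramified cohomology is unavailable. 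Your convergence criterion ``$r\,a(M)\ge r>a(N)$'' (using only $a(M)\ge 1$) is the nilpotent-case condition of Theorem 1.6; merely counting the admissible $\rho$ does not control the growth of the leading constants nor the non-uniform error terms, so it does not yield convergence of the sum or the claimed absence of an extra $\log$ power.

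The needed inequality $\frac{1}{a(M)}+\frac12<\frac{r}{a(N)}$ does hold in this setting, and your own cycle-length argument essentially supplies the missing ingredient: since $|B|$ is odd, $M$ contains no transposition, so $a(M)\ge 2$, while the pure-element/identity-block computation gives $a(N)<r$; this, not the concentration statement, is where the odd-order hypothesis is actually spent in the paper (together with Feit--Thompson plus Shafarevich to guarantee a $G$-extension of $k$, a hypothesis of Theorem 1.7 that you do not address). Finally, a smaller overclaim: the paper's theorems produce the exponent $\max_{\pi_1}b(k,N(\pi_1))-1$ and only \emph{conjecture} that this equals $B(k,T(\pi))-1$; asserting the $\log$-exponent is $B(k,T(\pi))-1$ is more than the method gives, though Conjecture 1.1 itself only requires some positive $b$, so this does not affect the truth of the corollary.
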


The simplest new case of Conjecture 1.1 arising from the above corollary is $T = C_2 \wr C_3 \trianglelefteq C_2 \wr C_9 = G$. Here we consider $C_9$ to be a permutation group of degree 9 that cyclically permutes 9 elements. Thus, we have that $T = C_2 \wr C_3 = C_2^9 \rtimes C_3$, where $C_3$ is an intransitive permutation group acting on 9 letters as the subgroup $C_3 < C_9 < S_9$. 
\\
\\ 
The above families of groups are very far from being an exhaustive list of the new cases of Conjecture 1.1 implied by our main results. We will now state the main theorems proved in our paper. The definition of tower type is given at the beginning of Section 3. 

\begin{theorem}
Let $k$ be a number field. Let $G$ be an imprimitive permutation group of tower type $(A, B)$, where $A$ is a finite abelian group and $B$ is a transitive permutation group of degree $m$. Suppose that $G$ is nilpotent. Let $N = A^m \cap G$. Let $T \trianglelefteq G$ be a proper normal subgroup such that $N \subseteq T$ and $T/N$ is abelian. Let $q: G \to G/T, p: G \to G/N, h: G/N \to G/T$ denote the canonical quotient maps. If \[ \frac{1}{a(T/N)} < \frac{|A|}{a(N)} ,\] then for every $\pi \in q_* \Sur(G_k, G)$, there exists a positive constant $c$ depending on $k, G, T, $ and $\pi$ such that \[
\#\{\psi \in q_*^{-1}(\pi) : |{\disc_G(\psi)}| \le X\} \sim c X^{1/a(N)} (\log X)^{\max_{\pi_1}b(k, N(\pi_1)) -1}
\] where the maximum is taken over the set $\{\pi_1 \in p_*\Sur(G_k, G): h \circ \pi_1 = \pi\}$. 
\end{theorem}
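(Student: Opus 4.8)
The plan is to prove Theorem 1.6 by an inductive "layer-by-layer" counting argument in the spirit of Alberts, Lemke Oliver, Wang and Wood, exploiting the two-step filtration $1 \trianglelefteq N \trianglelefteq T \trianglelefteq G$, with $N = A^m \cap G$ abelian (as a subgroup of $A^m$) and $T/N$ abelian. Fix $\pi \in q_*\Sur(G_k, G)$. The first step is to stratify the fiber $q_*^{-1}(\pi)$ according to the intermediate pushforward: for each $\pi_1 \in p_*\Sur(G_k, G)$ with $h \circ \pi_1 = \pi$ (there are finitely many such $\pi_1$, and the set is nonempty because $\pi$ lies in the image of $q_*$ and $G$ is nilpotent, so the relevant embedding problems are solvable), we count $\psi \in q_*^{-1}(\pi)$ with $p_*\psi = \pi_1$. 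This reduces Theorem 1.6 to a uniform count, over the fixed "base" datum $\pi_1$, of surjections $\psi \colon G_k \to G$ lifting $\pi_1$ through the abelian-kernel quotient $G \to G/N$, ordered by $\disc_G(\psi)$; summing over the finitely many $\pi_1$ and taking the dominant term then yields the stated exponent $1/a(N)$ and the logarithmic power $\max_{\pi_1} b(k, N(\pi_1)) - 1$.

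Next, for a fixed $\pi_1$, I would analyze the discriminant. Because $G$ has tower type $(A,B)$, the degree-$n$ permutation representation is induced from the imprimitivity block structure, so $\disc_G(\psi)$ factors (up to a bounded and controlled contribution supported on a fixed finite set of primes, coming from ramification in the base) as a product of a "base" part depending only on $\pi_1$ and a "fiber" part measuring the ramification of the $N$-extension twisted by $\pi_1$. Quantitatively, the key inequality $\frac{1}{a(T/N)} < \frac{|A|}{a(N)}$ is exactly the condition ensuring that the fiber contribution dominates: extensions where the new ramification comes from the $N$-layer are far more numerous (as $X \to \infty$) than those where it would have to come from further up in $T/N$, so the count is governed by $a(N)$ rather than by any compromise between $a(N)$ and $a(T/N)$. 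I would make this precise by an upper-bound argument showing that the $\psi$ whose discriminant is "inefficient" at the $N$-layer contribute $O(X^{1/a(N) - \delta})$ or at worst a strictly smaller power of $\log X$, and hence do not affect the main term.

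The main term itself comes from counting twisted abelian extensions: for the fixed $\pi_1$, the lifts $\psi$ of $\pi_1$ through $G \to G/N$ are a torsor under $H^1(G_k, N(\pi_1))$ (when nonempty; nonemptiness and surjectivity onto $G$ are handled because $N(\pi_1) = T(\pi)$-compatible data and $G$ is nilpotent), and counting such classes ordered by the norm of the conductor is precisely the Alberts–O'Dorney count for abelian $T$, giving an asymptotic $c_{\pi_1} X^{1/a(N)} (\log X)^{b(k, N(\pi_1)) - 1}$. I would invoke \cite[Corollary 1.2]{AlbertsOdorney2021} in the twisted form, being careful that the ordering by $\disc_G(\psi)$ agrees (up to the bounded base factor and the block-induction bookkeeping) with the ordering by the conductor of the $N$-twist; the discrepancy is absorbed into the constant. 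Summing over the finitely many $\pi_1$ and noting that terms with the maximal value of $b(k, N(\pi_1))$ dominate while the rest are lower-order gives the theorem.

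The hardest step, I expect, will be the second one: rigorously separating the discriminant into "base" and "fiber" contributions and proving that the fiber part dominates under the hypothesis $\frac{1}{a(T/N)} < \frac{|A|}{a(N)}$. This requires a careful analysis of how index functions behave under wreath-type induction (the factor $|A|$ in the hypothesis reflects the block size), a uniform tail estimate showing the "inefficient" lifts are negligible, and a check that no cross-terms between the $N$-layer and the $T/N$-layer inflate the logarithmic exponent beyond $\max_{\pi_1} b(k, N(\pi_1))$ — this last point being why the maximum appears in the statement rather than a single $b$. Making the error terms uniform over the (bounded) set of base data $\pi_1$ and over the twisting, so that they can be summed, is the main technical burden.
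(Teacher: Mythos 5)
Your overall decomposition is the right one --- stratify $q_*^{-1}(\pi)$ by the intermediate pushforward $\pi_1 = p_*\psi$ and apply the abelian (Alberts--O'Dorney) count to each fiber $p_*^{-1}(\pi_1)$ --- but there is a genuine gap at the very first step: you assert that the set $S(\pi) = \{\pi_1 \in p_*\Sur(G_k,G) : h \circ \pi_1 = \pi\}$ is finite. It is not. Its elements, ordered by discriminant, are themselves counted by the abelian case of the twisted conjecture applied to $T/N \trianglelefteq G/N \cong B$, so the number of relevant $\pi_1$ with $|\disc_B(\pi_1)| \le Y$ grows like $Y^{1/a(T/N)}(\log Y)^{\ast}$. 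Consequently ``summing over the finitely many $\pi_1$ and taking the dominant term'' is not available, and the entire analytic content of the theorem is precisely the handling of this infinite sum. The paper does this via a dominated-convergence (Tannery) argument that requires three uniform inputs your sketch does not supply: (i) an upper bound on each fiber count that is uniform in $\pi_1$, of the shape $\ll |H^1_{ur}(k,N(\pi_1))| \cdot (\disc(\pi_1))^{-1/a(N)+\epsilon} X^{1/a(N)}(\log X)^{b-1}$ (ALOWW Theorem 6.1); (ii) a bound on how many $\pi_1 \in S(\pi)$ have discriminant relevant to height $X$, obtained from $p_*\Sur(G_k,G;X) \subseteq \Sur(G_k,B;dX^{1/|A|})$ together with the abelian count for $T/N \trianglelefteq B$, giving $\ll X^{1/(|A|a(T/N))+\epsilon}$; and (iii) a bound $|H^1_{ur}(k,N(\pi_1))| \ll X^{\epsilon}$ uniform over these $\pi_1$. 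The hypothesis $\frac{1}{a(T/N)} < \frac{|A|}{a(N)}$ enters exactly here, to make the resulting series of constants converge and the tail negligible --- not, as you frame it, as a base-versus-fiber discriminant dominance statement about individual extensions.

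Relatedly, you invoke nilpotency of $G$ only to claim solvability of embedding problems and nonemptiness of the fibers; the latter is automatic (any $\psi$ with $q_*\psi=\pi$ yields $\pi_1=p_*\psi \in S(\pi)$), and neither is what nilpotency is actually needed for. In the paper nilpotency is used twice: it gives solvability of $G$, hence existence of a $G$-extension by Shafarevich (a hypothesis of the Alberts--O'Dorney input), and, more importantly, it makes $N(\pi_1)$ a nilpotent $G_k$-module so that $|H^1_{ur}(k,N(\pi_1))| \ll X^{\epsilon}$ (ALOWW Corollary 1.14(i)), which is ingredient (iii) above. Without some substitute for these uniform cohomology and counting bounds, your proposed ``base/fiber discriminant separation'' step cannot by itself control the contribution of the infinitely many $\pi_1$, so the argument as sketched does not close.
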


\begin{theorem}
Let $k$ be a number field. Let $G$ be an imprimitive permutation group of tower type $(A, B)$, where is a finite abelian group and $B$ is a transitive permutation group of degree $m$. Let $r$ denote the rank of $A$. Suppose that there is at least one $G$-extension of $k$. Let $N = A^m \cap G$. Let $T \trianglelefteq G$ be a proper normal subgroup such that $N \subseteq T$ and $T/N$ is abelian. Let $q: G \to G/T, p: G \to G/N, h: G/N \to G/T$ denote the canonical quotient maps. If \[ \frac{1}{a(T/N)} + \frac{r}{2} < \frac{|A|}{a(N)} ,\] then for every $\pi \in q_* \Sur(G_k, G)$, there exists a positive constant $c$ depending on $k, G, T, $ and $\pi$ such that \[
\#\{\psi \in q_*^{-1}(\pi) : |{\disc_G(\psi)}| \le X\} \sim c X^{1/a(N)} (\log X)^{\max_{\pi_1}b(k, N(\pi_1)) -1}
\] where the maximum is taken over the set $\{\pi_1 \in p_*\Sur(G_k, G): h \circ \pi_1 = \pi\}$.
\end{theorem}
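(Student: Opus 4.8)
The plan is to prove Theorem 3.2 by the fibered-counting strategy of Theorem 3.1 and \cite{AlbertsLemkeOliverWangWood2025}: factor $q = h\circ p$ and sum over the intermediate level $G/N$. Partitioning $q_*^{-1}(\pi)$ according to the $G/N$-pushforward gives
\[
\#\{\psi \in q_*^{-1}(\pi) : |\disc_G(\psi)| \le X\} = \sum_{\substack{\pi_1 \in p_*\Sur(G_k, G) \\ h \circ \pi_1 = \pi}} \#\{\psi \in p_*^{-1}(\pi_1) : |\disc_G(\psi)| \le X\},
\]
so it suffices to (a) understand each inner fiber count uniformly in $\pi_1$, and (b) sum over the (liftable) $\pi_1$ lying above $\pi$. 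The first technical step is the discriminant factorization coming from the tower-type structure: since $N = A^m \cap G$, for $\psi$ lifting a fixed $\pi_1$ one has $|\disc_G(\psi)| = D(\pi_1)\cdot \mathfrak{c}(\psi)$, where $D(\pi_1)$ is the ``base'' part, depending only on $\pi_1$ and comparable to the $|A|$-th power of the discriminant of the corresponding $G/N$-extension, and $\mathfrak{c}(\psi)$ is the norm of the conductor of the $N(\pi_1)$-part of $\psi$.

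For step (a): with $\pi_1$ fixed, $p_*^{-1}(\pi_1)$ is exactly the set of lifts of $\pi_1: G_k \to G/N$ to $G$, and since $N \le A^m$ is abelian this is the twisted abelian counting problem governed by $N(\pi_1)$. The hypothesis that $k$ admits at least one $G$-extension, together with a twisting argument relating liftability of $\pi_1$ to that of $\pi$ (adjusting by classes in $H^1(G_k, N(\pi_1))$), guarantees the relevant fibers are nonempty, so \cite[Corollary 1.2]{AlbertsOdorney2021} gives $\#\{\psi \in p_*^{-1}(\pi_1): \mathfrak{c}(\psi) \le Y\} \sim c(\pi_1)\, Y^{1/a(N)}(\log Y)^{b(k,N(\pi_1))-1}$ as $Y \to \infty$. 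Combined with the factorization, this yields $\#\{\psi \in p_*^{-1}(\pi_1): |\disc_G(\psi)| \le X\} \sim c(\pi_1)(X/D(\pi_1))^{1/a(N)}(\log(X/D(\pi_1)))^{b(k,N(\pi_1))-1}$ whenever $X/D(\pi_1) \to \infty$. One also needs a crude upper bound, uniform in $\pi_1$ and valid for all $X$, of the shape $O_\varepsilon\bigl(D(\pi_1)^{\varepsilon}(1+X/D(\pi_1))^{1/a(N)+\varepsilon} + D(\pi_1)^{r/(2|A|)+\varepsilon}\bigr)$; the last term bounds the number of twisted $N$-extensions of near-trivial conductor and comes from the Minkowski/class-number estimate $|\Cl| \ll \disc^{1/2+\varepsilon}$ applied to the $r = \rk(A)$ cyclic factors of the relevant piece of $N$ (over the degree-$m$ base, the $N$-part decomposes blockwise into $A$-extensions).

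For step (b): ordering the $\pi_1$ above $\pi$ by $D(\pi_1)$, the number with $D(\pi_1)\le Y$ is $\asymp Y^{1/(|A|\,a(T/N))}(\log Y)^{O(1)}$ — this is again an abelian count, now for the abelian quotient $T/N$ over the fixed base $\pi$, combined with the fact that the base part of $\disc_G$ scales as an $|A|$-th power. The inequality $\tfrac{1}{a(T/N)} + \tfrac{r}{2} < \tfrac{|A|}{a(N)}$ enters twice. First, since it implies $\tfrac{1}{a(T/N)} < \tfrac{|A|}{a(N)}$, the series $\sum_{\pi_1} c(\pi_1) D(\pi_1)^{-1/a(N)}$ converges; applying the asymptotic of step (a) for $D(\pi_1)$ bounded and the crude bound otherwise, one obtains
\[
\sum_{\pi_1}\#\{\psi \in p_*^{-1}(\pi_1): |\disc_G(\psi)| \le X\} \sim X^{1/a(N)}(\log X)^{\max_{\pi_1'} b(k,N(\pi_1'))-1}\sum_{\substack{\pi_1 \\ b(k,N(\pi_1)) \text{ maximal}}} c(\pi_1) D(\pi_1)^{-1/a(N)},
\]
with the power of $\log X$ pinned down by the $\pi_1$ realizing the maximal value of $b(k,N(\pi_1))$. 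Second, the full $r/2$ is needed to bound the ``boundary'' contribution, i.e. the $\pi_1$ with $D(\pi_1)$ comparable to $X$, where the asymptotic of step (a) is unavailable: this is $O_\varepsilon\bigl(\sum_{D(\pi_1)\le X} D(\pi_1)^{r/(2|A|)+\varepsilon}\bigr) = O\bigl(X^{(1/a(T/N) + r/2 + \varepsilon)/|A|}\bigr) = o(X^{1/a(N)})$ precisely under the stated inequality. Finally one checks $a(T) = a(N)$, i.e. every minimal-index element of $T$ lies in $N$, which follows from $a(N) < |A|\,a(T/N)$; this identifies the exponent of $X$ as $1/a(N)$ and completes the proof.

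The main obstacle is step (a) in its \emph{uniform} form: \cite[Corollary 1.2]{AlbertsOdorney2021} is an asymptotic for one base at a time, and summing over the infinitely many $\pi_1$ requires an error term with explicit, sufficiently mild dependence on $D(\pi_1)$ — this dependence is what forces the $r/2$ in the hypothesis, and obtaining a clean enough such statement is the crux (in Theorem 3.1 nilpotence of $G$ is what lets one avoid it). A secondary difficulty is the liftability bookkeeping: verifying that enough $\pi_1$ above $\pi$ actually lie in $p_*\Sur(G_k, G)$, and that the single assumed $G$-extension propagates to make the dominant fibers nonempty.
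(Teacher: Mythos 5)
Your outline follows essentially the same route as the paper: decompose $q_*^{-1}(\pi)$ over the intermediate quotient $G/N$, treat each fiber $p_*^{-1}(\pi_1)$ as a twisted abelian count via \cite[Corollary 1.2]{AlbertsOdorney2021}, control the number of admissible $\pi_1$ above $\pi$ by the abelian count for $T/N \trianglelefteq G/N$ (giving the exponent $1/(|A|\,a(T/N))$), and let a class-number bound supply the extra $r/2$. But the step you yourself call ``the crux'' --- an upper bound for $\#\{\psi \in p_*^{-1}(\pi_1): |\disc_G(\psi)|\le X\}$ that is uniform in $\pi_1$ and summable over the fibers --- is left unproved in your proposal, and as written this is a genuine gap: without such a dominating bound you cannot interchange $X\to\infty$ with the infinite sum over $\pi_1$, and your estimate for the $\pi_1$ with $D(\pi_1)$ comparable to $X$ rests only on the heuristic appeal to Minkowski.

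The paper closes precisely this gap by citation. It invokes Theorem 6.1 of \cite{AlbertsLemkeOliverWangWood2025}, which bounds each fiber uniformly by $|H^1_{ur}(k,N(\pi_1))|\,(p_*\disc_G(\pi_1))^{-1/a(N)+\epsilon}\, X^{1/a(N)}(\log X)^{b(k,N(\pi_1))-1}$, and Corollary 1.14(iii) of the same paper, which, via the embedding $N(\pi_1)\hookrightarrow \Ind_F^k(A)$, gives $|H^1_{ur}(k,N(\pi_1))| \ll |\Hom(\Cl_F,A)| \ll |\disc(F/\mathbb{Q})|^{r/2+\epsilon}$ --- the rigorous version of your ``Minkowski applied to the $r$ cyclic factors.'' Combined with Proposition 5.2 of that paper (the one-sided comparison $p_*\Sur(G_k,G;X)\subseteq \Sur(G_k,B;dX^{1/|A|})$, so no exact factorization $|\disc_G(\psi)|=D(\pi_1)\mathfrak{c}(\psi)$ is needed) and the Alberts--O'Dorney count for $T/N$, this yields $\sum_{\pi_1} |H^1_{ur}(k,N(\pi_1))| \ll X^{1/(|A|a(T/N)) + r/(2|A|)+\epsilon}$, and the hypothesis makes this exponent smaller than $1/a(N)$; Tannery's theorem, packaged as Proposition 2.1, then finishes. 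Your remaining worries are inessential: empty fibers contribute zero, so no liftability bookkeeping is required beyond the assumed existence of one $G$-extension (exactly the hypothesis of \cite[Corollary 1.2]{AlbertsOdorney2021}), and the verification $a(T)=a(N)$ is a consistency check with Conjecture 1.2, not part of the proof.
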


Now, we will briefly explain the approach used to prove the above theorems. Given $\pi \in q_* \Sur(G_k, G)$, we first consider the set of homomorphisms $\pi_1 \in p_* \Sur(G_k, G)$ such that $h \circ \pi_1 = \pi$. Then, for each such $\pi_1$, we determine the asymptotic growth of $\#\{ \psi \in p_*^{-1}(\pi_1): |{\disc_G(\psi)}| \leq X\}$ using the result in \cite[Corollary 1.2]{AlbertsOdorney2021} and the fact that $N$ is abelian. After this, we sum over all $\pi_1 \in p_* \Sur(G_k, G)$ such that $h \circ \pi_1 = \pi$. Since we take $T/N$ to be abelian, we then use Twisted Malle's conjecture for $T/N \trianglelefteq G/N$ to determine when the sum of the constant terms of the asymptotics converge. It is more insightful to describe the approach in terms of counting number fields. Let $L/k$ be the number field corresponding to $\pi$ under the Galois correspondence. Then, $L/k$ is a $G/T$-extension. Recall that we want to count the number of $G$-extensions $F/k$ with $|{\disc(F/k)}| \leq X$ such that the subfield of $\tilde{F}$ fixed by $T$ is $L$. In order to do this, we first consider the set $U$ of $G/N$ extensions $M/k$ such that the subfield of $\tilde{M}$ fixed by $T/N$ is $L$. We can count the number of such $G/N$ extensions by using Twisted Malle's Conjecture for $T/N \trianglelefteq G/N$. Then, for each such $G/N$-extension $M/k$, we count the number of $G$-extensions $F/k$ with $|{\disc(F/k)}| \leq X$ such that the subfield of $\tilde{F}$ fixed by $N$ is $M$ using Twisted Malle's conjecture for $N \trianglelefteq G$. Summing over the $G/N$-extensions in $U$ gives us asymptotics for the number of $G$-extensions $F/k$ with $|{\disc(F/k)}| \leq X$ such that the subfield of $\tilde{F}$ fixed by $T$ is $L$, as desired. 
\\
\\
We can check that the value of $a$ given by Theorems 1.6 and 1.7 agrees with the value of $a$ predicted by Alberts in Conjecture 1.2. Since $N \subseteq T$, we have $a(T) \leq a(N)$. Now, fix an element $g \in T$. Since $T \trianglelefteq G \subseteq A \wr B$, we may write $g = ((g_1, \dots, g_m), g_{m + 1})$, where $g_1, \dots, g_m \in A$ and $g_{m + 1} \in T/N \trianglelefteq B$. If $g_{m + 1} = e_B$, then we have $g \in N$, which implies that $\ind(g) \geq a(N)$. If $g_{m + 1} \neq e_B$, then the number of orbits (or cycles) of $g_{m + 1} \in B \subset S_m$ is $\leq m - a(T/N)$. Thus, the number of orbits (or cycles) of $g \in T \subset S_{m|A|}$ is $\leq m|A| - a(T/N)|A|$. Therefore, $\ind(g) \geq a(T/N)|A| > a(N)$, where the last inequality follows from the hypotheses of Theorems 1.6 and 1.7. Hence, we have shown that if $g \in T$, then $\ind(g) \geq a(N)$. Thus, it follows that $a(T) \geq a(N)$. Hence, we have $a(T) = a(N)$, as desired. Thus, we see that the value of $a$ given by Theorems 1.6 and 1.7 ($a(N)$) agrees with Alberts' prediction in Conjecture 1.2 ($a(T)$). \\ \\ Note that the value of $b$ given by Theorems 1.6 and 1.7 is clearly less than or equal to $B(k, T(\pi))$ since $B(k, T(\pi))$ is a maximum over all $N \trianglelefteq G$ with $N \trianglelefteq T$ and $a(N) = a(T)$, not just $N = A^m \cap G$. Now, to prove the other direction, suppose that $N_1 \trianglelefteq G$ is a proper normal subgroup such that $N_1 \trianglelefteq T$ and $a(N_1) = a(T)$. We conjecture that the value of $b$ given by Theorems 1.6 and 1.7 is equal to $B(k, T(\pi))$, which is a purely group-theoretic conjecture. 
\\
\\
In section 2 we provide an inductive framework, modeled on \cite{AlbertsLemkeOliverWangWood2025}, which allows us to deduce Conjecture 1.1 for $T \trianglelefteq G$ provided one can obtain suitable explicit bounds on the sizes of the unramified cohomology groups $H^1_{ur}(k, N(\pi_1))$ for an abelian normal subgroup $N \trianglelefteq G$ with $N \subseteq T$. In section 3, we implement this framework to prove Conjecture 1.1 in a wide range of new cases. 

\section*{Acknowledgements}

The author thanks Professor Melanie Wood for advising this project and giving many helpful suggestions. The author would also like to thank Brandon Alberts and Evan O'Dorney for providing helpful comments on an earlier draft of the paper.  

\section{Preliminary Work}

Let $k$ be a number field, and let $G$ be a finite transitive permutation group of degree $n$. Let $T \trianglelefteq G$ be a proper normal subgroup with canonical quotient map $q: G \to G/T$. Let $N \trianglelefteq G$ be a proper normal subgroup with canonical quotient map $p: G \to G/N$. Suppose that $N$ is abelian and that $N \subseteq T$. Moreover, let $h: G/N \to G/T$ denote the canonical quotient map. Given $\pi \in q_*\Sur(G_k, G)$, define \[ S(\pi) = \{\pi_1 \in p_*\Sur(G_k, G): h \circ \pi_1 = \pi \}.\] Now, fix $\pi \in q_*\Sur(G_k, G)$. Then, we have that \[ \#\{\psi \in q_*^{-1}(\pi) : |{\disc_G(\psi)}| \le X\} = \sum_{\pi_1 \in S(\pi)} \#\{\psi \in p_*^{-1}(\pi_1): |{\disc_G(\psi)}| \le X\} .\] In the above equation we are essentially counting the $G$-extensions with a given $T$ subfield $L$ by summing over all $G/N$-extensions whose $T/N$ subfield is $L$.     
\\
\\
The proposition below is essentially a modified version of Theorem 1.11 in \cite{AlbertsLemkeOliverWangWood2025}, tailored to be more effective for proving cases of Conjecture 1.1. We define the cohomology group $H^1_{ur}(k,T(\pi))$ to be the subgroup of unramified classes,
\[
H^1_{ur}(k,T(\pi)) \;=\; \{ f \in H^1(k,T(\pi)) : \forall \mathfrak{p},\; \mathrm{res}_{I_{\mathfrak{p}}}(f) = 0 \},
\]
where $\mathfrak{p}$ ranges over all finite and infinite places of $k$, and $I_{\mathfrak{p}}$ is the inertia group of $k$ at each finite place $\mathfrak{p}$ and the decomposition group at each infinite place.
\begin{proposition}
Suppose that there exists at least one $G$-extension of $k$. Now, Suppose $\theta \geq 0$ is such that $\theta < \frac{1}{a(N)}$ and \[ \sum_{\pi_1 \in S(\pi) \cap p_*\Sur(G_k, G; X)} |H^1_{ur}(k, N(\pi_1))| \ll_{k, n, \pi} X^{\theta}.\] Then, there exists a positive constant $c$ depending on $k, G, T,$ and $\pi$ such that \[
\#\{\psi \in q_*^{-1}(\pi) : |{\disc_G(\psi)}| \le X\} \sim c X^{1/a(N)} (\log X)^{\max_{\pi_1}b(k, N(\pi_1)) -1}
\] where the maximum is taken over $\pi_1 \in S(\pi)$.
\end{proposition}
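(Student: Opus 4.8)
The plan is to exploit the decomposition recorded immediately before the statement,
\[
\#\{\psi \in q_*^{-1}(\pi) : |\disc_G(\psi)| \le X\} = \sum_{\pi_1 \in S(\pi)} N(\pi_1; X), \qquad N(\pi_1; X) := \#\{\psi \in p_*^{-1}(\pi_1): |\disc_G(\psi)| \le X\},
\]
and to analyze the right-hand side one term at a time before summing. Since $N$ is abelian and $k$ admits a $G$-extension, the Alberts--O'Dorney theorem \cite[Corollary 1.2]{AlbertsOdorney2021} applies to $N \trianglelefteq G$; each $\pi_1 \in S(\pi)$ lies in $p_*\Sur(G_k, G)$ by definition, so $p_*^{-1}(\pi_1) \ne \emptyset$ and
\[
N(\pi_1; X) \sim c_{\mathrm{AO}}(k, N(\pi_1))\, X^{1/a(N)} (\log X)^{b(k, N(\pi_1)) - 1}
\]
for a positive constant $c_{\mathrm{AO}}(k, N(\pi_1))$. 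Two structural facts go alongside this. First, $b(k, N(\pi_1))$ is the number of orbits of the finite set $\{t \in N : \ind(t) = a(N)\}$ under a Galois action that factors through the finite image of $(\pi_1, \chi)$ in $(G/N) \times \hat{\Z}^\times$; hence it takes only finitely many values as $\pi_1$ varies over $S(\pi)$, and $b_{\max} := \max_{\pi_1 \in S(\pi)} b(k, N(\pi_1))$ is attained by some $\pi_1^\ast \in S(\pi)$. Second, twisting $\psi \in p_*^{-1}(\pi_1)$ by a cocycle representing a class in $H^1_{ur}(k, N(\pi_1))$ produces another element of $p_*^{-1}(\pi_1)$ that is locally conjugate to $\psi$ at every finite place, hence has the same discriminant; so $|H^1_{ur}(k, N(\pi_1))|$ controls the number of elements of $p_*^{-1}(\pi_1)$ of a given discriminant and enters the main term of $N(\pi_1;X)$ as a multiplicative factor.

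Writing $d(\pi_1) := \min_{\psi \in p_*^{-1}(\pi_1)} |\disc_G(\psi)|$, so that $N(\pi_1; X) = 0$ once $d(\pi_1) > X$ and $\pi_1 \in p_*\Sur(G_k, G; X)$ exactly when $d(\pi_1) \le X$, the next step is a uniform upper bound of Malle type,
\[
N(\pi_1; X) \ \ll_{k, n}\ |H^1_{ur}(k, N(\pi_1))| \left(\frac{X}{d(\pi_1)}\right)^{1/a(N)} (\log X)^{b_{\max} - 1},
\]
valid for all $\pi_1 \in S(\pi)$ and all $X \ge 2$ --- the abelian twisted analogue of the upper bounds underlying \cite{AlbertsOdorney2021}, with the factor $|H^1_{ur}(k, N(\pi_1))|$ from the unramified twists and the power of $d(\pi_1)$ from the ramification of $\pi_1$ itself. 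Granting this, fix a threshold $Y$ and split $S(\pi)$ accordingly. The terms with $d(\pi_1) \le Y$ form a finite set (only finitely many $G/N$-extensions of $k$ have bounded discriminant); summing the Alberts--O'Dorney asymptotics over them and discarding the lower-order contributions from $\pi_1$ with $b(k, N(\pi_1)) < b_{\max}$ gives $\Bigl(\sum_{\pi_1 \in S(\pi),\, d(\pi_1) \le Y,\, b(k,N(\pi_1)) = b_{\max}} c_{\mathrm{AO}}(k, N(\pi_1))\Bigr) X^{1/a(N)}(\log X)^{b_{\max}-1}(1 + o(1))$. For the terms with $d(\pi_1) > Y$, the uniform upper bound together with partial summation and the hypothesis $\sum_{d(\pi_1) \le Z} |H^1_{ur}(k, N(\pi_1))| \ll Z^\theta$ bounds their total contribution by $\ll X^{1/a(N)}(\log X)^{b_{\max}-1} \, Y^{-(1/a(N) - \theta)}$, which is small since $\theta < 1/a(N)$. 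The same partial-summation estimate (now using $c_{\mathrm{AO}}(k, N(\pi_1)) \ll |H^1_{ur}(k, N(\pi_1))|\, d(\pi_1)^{-1/a(N)}$, obtained from the uniform bound on the $b_{\max}$ terms) shows that $c := \sum_{\pi_1 \in S(\pi),\, b(k, N(\pi_1)) = b_{\max}} c_{\mathrm{AO}}(k, N(\pi_1))$ converges. Letting $X \to \infty$ and then $Y \to \infty$ gives $\#\{\psi \in q_*^{-1}(\pi): |\disc_G(\psi)| \le X\} \sim c\, X^{1/a(N)}(\log X)^{b_{\max}-1}$, and $c > 0$ because the summand indexed by $\pi_1^\ast$ is positive.

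The main obstacle is the uniform upper bound on $N(\pi_1; X)$: one needs control of the abelian twisted counting function that is uniform over the infinitely many $\pi_1 \in S(\pi)$, with the correct explicit dependence on both the base discriminant $d(\pi_1)$ and the unramified cohomology group $H^1_{ur}(k, N(\pi_1))$. Everything downstream --- interchanging the sum over $\pi_1$ with the limit in $X$, isolating the $b_{\max}$ terms, and the convergence and positivity of $c$ --- is then routine partial summation fed by the hypothesis. A secondary technical point is the behavior of $\disc_G$ under twisting by a not-necessarily-unramified cocycle, which is what makes the $d(\pi_1)$-dependence in the upper bound honest; this is part of the conductor--discriminant bookkeeping already present in the abelian case of Conjecture 1.1.
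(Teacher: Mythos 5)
Your proposal follows essentially the same route as the paper: decompose the count over $S(\pi)$, apply Alberts--O'Dorney termwise, dominate each term uniformly by a bound of the shape $|H^1_{ur}(k,N(\pi_1))|\, d(\pi_1)^{-1/a(N)+\epsilon} X^{1/a(N)}(\log X)^{b-1}$, and use the hypothesis together with a dyadic/partial-summation argument to interchange the sum over $\pi_1$ with the limit in $X$ (your cutoff-in-$Y$ argument is just an unpacked form of the paper's appeal to Tannery's theorem). The uniform upper bound you flag as the main obstacle is not something you need to prove from scratch: it is precisely Theorem 6.1 of \cite{AlbertsLemkeOliverWangWood2025}, which the paper cites at exactly this point, so your argument is complete once that citation is inserted.
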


\begin{proof}

In \cite[Corollary 1.2]{AlbertsOdorney2021}, Alberts and O'Dorney proved Conjecture 1.1 in the case where $T$ is abelian given that there exists at least one $G$-extension of $k$. Thus, since $N$ is abelian, we have that for each $\pi_1 \in S(\pi)$, there exists a positive constant $c'(\pi_1)$ such that \[\#\{\psi \in p_*^{-1}(\pi_1): |{\disc_G(\psi)}| \le X\} \sim c'(\pi_1) X^{1/a(N)} (\log X)^{b(k, N(\pi_1)) - 1} .\] Now, define $b = \max_{\pi_1} b(k, N(\pi_1))$. If $b(k, N(\pi_1)) = b$, then define $c(\pi_1) = c'(\pi_1)$. Otherwise, define $c(\pi_1) = 0$. Then, we have that for each $\pi_1 \in S(\pi)$, there exists a constant $c(\pi_1) \geq 0$ such that \[ \#\{\psi \in p_*^{-1}(\pi_1): |{\disc_G(\psi)}| \le X\} \sim c(\pi_1) X^{1/a(N)} (\log X)^{b - 1} \] Dividing both sides of the equation by $X^{1/a(N)} (\log X)^{b - 1}$, we get that \begin{equation} \lim_{X \to \infty} \frac{\#\{\psi \in p_*^{-1}(\pi_1): |{\disc_G(\psi)}| \le X\}}{X^{1/a(N)} (\log X)^{b - 1}} = c(\pi_1)\end{equation} for each $\pi_1 \in S(\pi)$.  Now, it follows from Theorem 6.1 in \cite{AlbertsLemkeOliverWangWood2025} that \[ \#\{\psi \in p_*^{-1}(\pi_1): |{\disc_G(\psi)}| \le X\} =
O_{n,[k:\mathbb{Q}],\epsilon}\!\left(
  \frac{\lvert H^1_{ur}(k, N(\pi_1)) \rvert}
       {(p_* \disc_G(\pi_1))^{1/a(N) - \epsilon}}
  \, X^{1/a(N)} (\log X)^{b(k,N(\pi_1))-1}
\right)
\] for all $\pi_1 \in S(\pi)$. In particular, this implies that for each $\pi_1 \in S(\pi)$, there exists a constant $f(\pi_1) $ with \[ f(\pi_1) \ll_{n, [k: \mathbb{Q}], \epsilon} \frac{|H^1_{ur}(k, N(\pi_1))|}{(p_* \disc_G(\pi_1))^{1/a(N) - \epsilon}}   \] such that \[ \#\{\psi \in p_*^{-1}(\pi_1): |{\disc_G(\psi)}| \le X\} \leq f(\pi_1) X^{1/a(N)} (\log X)^{b - 1} \] for every $X \geq 2$. Again, dividing by $X^{1/a(N)} (\log X)^{b - 1}$ we get that \begin{equation} \frac{\#\{\psi \in p_*^{-1}(\pi_1): |{\disc_G(\psi)}| \le X\}}{X^{1/a(N)} (\log X)^{b - 1}} \leq f(\pi_1) \end{equation} for every $X \geq 2$. Now, recall that there exists some $\theta \geq 0$ such that $\theta < \frac{1}{a(N)}$ and \[ \sum_{\pi_1 \in S(\pi) \cap p_*\Sur(G_k, G; X)} |H^1_{ur}(k, N(\pi_1))| \ll X^{\theta}.\] From this, we get that \[ \sum_{\pi_1 \in S(\pi) \cap p_*\Sur(G_k, G; X)} f(\pi_1) \ll_{n, k, \epsilon} \sum_{\pi_1 \in S(\pi) \cap p_*\Sur(G_k, G; X)} \frac{|H^1_{ur}(k, N(\pi_1))|}{(p_*\disc_G(\pi_1))^{1/a(N) - \epsilon}} \ll_{n, k, \epsilon} 1 + X^{\theta - 1/a(N) + \epsilon}  \] (We get the second inequality by splitting into dyadic ranges). Since $\theta < \frac{1}{a(N)}$, the above implies that the series \[ \sum_{\pi_1 \in S(\pi)} f(\pi_1) \] converges. Observe that $c(\pi_1) \leq f(\pi_1)$ for all $\pi_1 \in S(\pi)$. This implies that the series $\sum_{\pi_1 \in S(\pi)} c(\pi)$ converges to some finite value $c$. Now, given (1), (2) and the fact that $\sum_{\pi_1 \in S(\pi)} f(\pi_1)$ converges, it follows from Tannery's theorem that \[ \lim_{X \to \infty} \sum_{\pi_1 \in S(\pi)} \frac{\#\{\psi \in p_*^{-1}(\pi_1): |{\disc_G(\psi)}| \le X\}}{X^{1/a(N)} (\log X)^{b - 1}} = \sum_{\pi_1 \in S(\pi)} \lim_{X \to \infty} \frac{\#\{\psi \in p_*^{-1}(\pi_1): |{\disc_G(\psi)}| \le X\}}{X^{1/a(N)} (\log X)^{b - 1}} \]\[ =  \sum_{\pi_1 \in S(\pi)} c(\pi_1) = c  .\] Thus, we get \[ \lim_{X \to \infty} \frac{ \#\{\psi \in q_*^{-1}(\pi) : |{\disc_G(\psi)}| \le X\}}{X^{1/a(N)} (\log X)^{b - 1}}  = c \] as desired.

\end{proof}

\section{Main results}

Let $A$ be a finite abelian group and let $B$ be a transitive permutation group of degree $m$ such that there is at least one $B$-extension of $k$. We call a group $H$ an imprimitive permutation group with tower type $(A, B)$ if we have $H \subset A \wr B$ with $H$ surjecting onto $B$ and $A^m \cap H$ surjecting onto $A$ through each of the $m$ projection maps. Now, suppose that $G$ is an imprimitive permutation group with tower type $(A, B)$. Let $N = A^m \cap G$. Then, $N$ is a proper normal subgroup of $G$. Moreover, $N$ is abelian since $A$ is abelian.
\\ 
\\
Let $T \trianglelefteq G$ be a proper normal subgroup such that $A^m \cap G = N \subseteq T$. Let $p, q, h$ be as in Section 2. Now, suppose that $T/N$ is an abelian subgroup of $G/N \cong B$. Now, fix $\pi \in h_*\Sur(G_k, B)$. Then, it follows from \cite[Corollary 1.2]{AlbertsOdorney2021} that there exists a positive constant $c$ depending on $k, B, T/N$ and $\pi$ such that \begin{equation} \#\{\pi_1 \in h_*^{-1}(\pi): |{\disc_{B}(\pi_1)}| \leq X \} \sim cX^{1/a(T/N)} (\log X)^{b(k, (T/N)(\pi)) - 1} \end{equation} Let $S = \Stab_G(1)$. Moreover, let $S'$ be the preimage of $\Stab_B(1)$ in $G$. Since $N = A^m \cap G$, it follows that $N = \cap_{g \in G} gS'g^{-1}$. Therefore, applying Proposition 5.2 from \cite{AlbertsLemkeOliverWangWood2025}, we get \[ p_* \Sur(G_k, G; X) \subseteq \Sur(G_k, B; dX^{1/|A|}) \] for some constant $d > 0$ depending only on $[k: \mathbb{Q}]$ and $n$. Hence, it follows that \[ |S(\pi) \cap p_*\Sur(G_k, G; X)| \leq \#\{\pi_1 \in h_*^{-1}(\pi): |{\disc_B(\pi_1)}| \leq dX^{1/|A|}\}.\] From this, it immediately follows that \begin{equation} |S(\pi) \cap p_*\Sur(G_k, G; X)|  \ll_{n, k, \epsilon, B, T/N, \pi} X^{(1/(|A|)a(T/N)) + \epsilon} \end{equation} 

\subsection{Special case of Nilpotent groups}

\begin{proof}[Proof of Theorem 1.6] Now, suppose that $G$ is a nilpotent group. Since $G$ is a finite nilpotent group, it follows that it is a finite solvable group. Therefore, it follows from Shafarevich's theorem that there exists a $G$-extension of $k$ \cite{SW98}. \\ \\ Since $N \trianglelefteq G$ is abelian, it follows that $N(\pi_1)$ is a nilpotent $G_k$-module for every $\pi_1 \in p_*\Sur(G_k, G)$. Therefore, it follows from Corollary 1.14(i) in \cite{AlbertsLemkeOliverWangWood2025} that \[ |H^1_{ur}(k, N(\pi_1))| \ll_{k, N, \pi_1, \epsilon} X^{\epsilon} \] for all $\pi_1 \in p_*\Sur(G_k, G; X)$. Combining this with (6), we immediately get \[ \sum_{\pi_1 \in S(\pi) \cap p_*\Sur(G_k, G; X)} |H^1_{ur}(k, N(\pi_1))| \ll_{k, n, \pi, \epsilon} X^{(1/(|A|)a(T/N)) + \epsilon} .\] In particular, if \[ \frac{1}{|A|a(T/N)} < \frac{1}{a(N)} \] then we can use Proposition 2.1 to deduce that Conjecture 1.1 holds for $T \trianglelefteq G$. In other words, we immediately get Theorem 1.6, as desired.
\end{proof}
Now, we will consider some of the implications of Theorem 1.6. As above, let $G$ be an imprimitive permutation group of tower type $(A, B)$, and let $N = A^m \cap G$. Now, further suppose that $A$ and $B$ are $p$-groups for some prime $p$. Then, clearly $A \wr B$ is also a $p$-group, and so $G \subseteq A \wr B$ is also necessarily a $p$-group. Since every finite $p$-group is nilpotent, it follows that $G$ must be nilpotent. Therefore, assuming that $A$ and $B$ are $p$-groups is useful, as it allows us to arbitrarily choose $A$ and $B$ while still ensuring that $G$ is nilpotent. In particular, we get the following corollary:

\begin{corollary}
Let $k$ be a number field. Let $p \in \mathbb{Z}$ be a prime. Let $G$ be an imprimitive permutation group of tower type $(A, B)$, where $A$ is a finite abelian $p$-group and $B$ is a transitive permutation group of degree $m$ that is also a $p$-group. Let $N = A^m \cap G$. Let $T \trianglelefteq G$ be a proper normal subgroup such that $N \subseteq T$ and $T/N$ is abelian. Let $q: G \to G/T$ denote the canonical quotient map. If \[ \frac{1}{a(T/N)} < \frac{|A|}{a(N)} ,\] then for every $\pi \in q_* \Sur(G_k, G)$, there exists a positive constant $c$ depending on $k, G, T, $ and $\pi$ such that \[
\#\{\psi \in q_*^{-1}(\pi) : |{\disc_G(\psi)}| \le X\} \sim c X^{1/a(N)} (\log X)^{\max_{\pi_1}b(k, N(\pi_1)) -1}
\] where the maximum is taken over $\pi_1 \in S(\pi)$.
\end{corollary}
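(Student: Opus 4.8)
The plan is to derive Corollary 3.2 directly from Theorem 1.6 by checking that the hypotheses match. First I would observe that the setup of Corollary 3.2 is a special case of the setup of Theorem 1.6: we are given $G$ an imprimitive permutation group of tower type $(A,B)$ with $A$ a finite abelian $p$-group and $B$ a transitive permutation group of degree $m$ that is a $p$-group, and with $N = A^m \cap G$, and $T \trianglelefteq G$ a proper normal subgroup with $N \subseteq T$ and $T/N$ abelian. So all that is needed is to verify that $G$ is nilpotent, which is precisely the extra hypothesis that Theorem 1.6 requires.

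Next I would carry out that verification. Since $A$ is a $p$-group and $B$ is a $p$-group, the wreath product $A \wr B = A^m \rtimes B$ is a $p$-group (its order is $|A|^m |B|$, a power of $p$). As $G \subseteq A \wr B$, the group $G$ is a subgroup of a $p$-group, hence is itself a $p$-group. Every finite $p$-group is nilpotent, so $G$ is nilpotent. This is exactly the paragraph of discussion immediately preceding the corollary in the excerpt, so it can be stated compactly.

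Finally, with $G$ nilpotent confirmed, I would apply Theorem 1.6 verbatim: the inequality hypothesis $\frac{1}{a(T/N)} < \frac{|A|}{a(N)}$ is identical in the two statements, and the conclusion is identical as well (noting that the set $S(\pi)$ in Corollary 3.2 is the same as the set $\{\pi_1 \in p_*\Sur(G_k,G) : h \circ \pi_1 = \pi\}$ appearing in Theorem 1.6, by the definition of $S(\pi)$ given at the start of Section 2). Hence the asymptotic follows directly. There is no real obstacle here — this corollary is purely a specialization, and the only thing to be careful about is making explicit that "$A,B$ are $p$-groups" forces "$G$ nilpotent," which is the whole point of isolating this case; the genuine mathematical content lives in Theorem 1.6 and, behind it, Proposition 2.1 together with the results of Alberts--O'Dorney and Alberts--Lemke Oliver--Wang--Wood that it invokes.

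\begin{proof}[Proof of Corollary 3.2]
Since $A$ is a finite $p$-group and $B$ is a finite $p$-group, the order of $A \wr B = A^m \rtimes B$ is $|A|^m |B|$, which is a power of $p$. Hence $A \wr B$ is a finite $p$-group. As $G \subseteq A \wr B$, the group $G$ is a subgroup of a finite $p$-group and is therefore itself a finite $p$-group. Every finite $p$-group is nilpotent, so $G$ is nilpotent. The hypotheses of Theorem 1.6 are now all satisfied: $G$ is an imprimitive permutation group of tower type $(A,B)$ with $A$ a finite abelian group and $B$ a transitive permutation group of degree $m$, $G$ is nilpotent, $N = A^m \cap G$, $T \trianglelefteq G$ is a proper normal subgroup with $N \subseteq T$ and $T/N$ abelian, and
\[
\frac{1}{a(T/N)} < \frac{|A|}{a(N)}.
\]
Applying Theorem 1.6, for every $\pi \in q_*\Sur(G_k,G)$ there exists a positive constant $c$ depending on $k, G, T$, and $\pi$ such that
\[
\#\{\psi \in q_*^{-1}(\pi) : |{\disc_G(\psi)}| \le X\} \sim c X^{1/a(N)} (\log X)^{\max_{\pi_1} b(k, N(\pi_1)) - 1}
\]
as $X \to \infty$, where the maximum is taken over $\pi_1$ in the set $\{\pi_1 \in p_*\Sur(G_k,G) : h \circ \pi_1 = \pi\} = S(\pi)$. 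This is precisely the claimed asymptotic.
\end{proof}
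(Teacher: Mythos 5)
Your proposal is correct and follows exactly the paper's own route: the paper deduces this corollary from Theorem 1.6 by the same observation that $A \wr B$ is a finite $p$-group, hence $G \subseteq A \wr B$ is a $p$-group and therefore nilpotent, after which Theorem 1.6 applies verbatim. The only caveat is purely cosmetic: in the paper's numbering this is the first corollary of Section 3, not ``Corollary 3.2,'' so the label in your proof environment should be adjusted.
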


Now, let $f_0: G \to B$ be the projection onto $B$ and let $f_1, \dots, f_m: G \to A$ be the projection maps onto $A$. We call an element $g \in G$ a pure element if $f_0(g) = e_B$, $f_i(g) \neq e_A$ for some $1 \leq i \leq m$, and $f_j(g) = e_A$ for all $1 \leq j \leq m$ such that $j \neq i$ (Here $e_A$ and $e_B$ denote the identity elements of $A$ and $B$ respectively).

\begin{corollary}
Let $k$ be a number field. Let $p \in \mathbb{Z}$ be a prime. Let $G$ be an imprimitive permutation group of tower type $(A, B)$, where $A$ is a finite abelian $p$-group and $B$ is a transitive permutation group of degree $m$ that is also a $p$-group. Suppose that $G$ contains a pure element. Let $N = A^m \cap G$. Let $f: G \to B$ denote the surjection onto $B$. Let $T'$ be a proper abelian normal subgroup of $B$, and let $T = f^{-1}(T')$. Let $q: G \to G/T$ denote the canonical quotient map. Then, for every $\pi \in q_* \Sur(G_k, G)$, there exists a positive constant $c$ depending on $k, G, T,$ and $\pi$ such that \[
\#\{\psi \in q_*^{-1}(\pi) : |{\disc_G(\psi)}| \le X\} \sim c X^{1/a(N)} (\log X)^{\max_{\pi_1}b(k, N(\pi_1)) -1}
\] where the maximum is taken over $\pi_1 \in S(\pi)$. 
\end{corollary}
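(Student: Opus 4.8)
The plan is to deduce this corollary directly from Theorem~1.6: the real work is to verify its hypotheses in the present setting, and the only substantive one is the inequality $\tfrac{1}{a(T/N)}<\tfrac{|A|}{a(N)}$. First I would clear the formal points. Since $B$ is a $p$-group, so is $A\wr B$ and hence so is its subgroup $G$; in particular $G$ is nilpotent, as Theorem~1.6 requires. Because $T'\trianglelefteq B$ is proper and $f\colon G\to B$ is surjective with kernel $N=A^m\cap G$, the preimage $T=f^{-1}(T')$ is a proper normal subgroup of $G$ with $N\subseteq T$; moreover $f$ induces an isomorphism of degree-$m$ permutation groups $G/N\cong B$ carrying $T/N$ onto $T'$, so $T/N$ is abelian and $a(T/N)=a(T')$. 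Thus everything comes down to showing $a(N)<|A|\cdot a(T')$.

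For the bound on $a(N)$ I would use the pure element. Let $g\in G$ be pure, with $f_0(g)=e_B$, $f_i(g)=a\neq e_A$, and $f_j(g)=e_A$ for $j\neq i$. Since the $B$-component of $g$ is trivial, $g^{\,s}=((e_A,\dots,a^{s},\dots,e_A),e_B)$ for every $s$, so replacing $g$ by a suitable $p$-power keeps it pure and allows me to assume $\mathrm{ord}(a)=p$. Then $g\in N$, and on the $m|A|$ points (viewed as $m$ blocks of size $|A|$) it fixes the $(m-1)|A|$ points outside block $i$ and acts on block $i$ by the left-regular action of $a$, i.e.\ as $|A|/p$ disjoint $p$-cycles; hence $\ind(g)=m|A|-\big((m-1)|A|+|A|/p\big)=|A|(1-1/p)$. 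Therefore $a(N)\le |A|\tfrac{p-1}{p}<|A|$. (The same block-by-block count gives $\ind(n)=|A|\sum_j\big(1-1/\mathrm{ord}(n_j)\big)\ge|A|(1-1/p)$ for every $n=((n_1,\dots,n_m),e_B)\in N\setminus\{1\}$, so in fact $a(N)=|A|\tfrac{p-1}{p}$, but only the upper bound is needed.)

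Finally I would bound $a(T')$ from below. If $T'\neq\{e_B\}$, then any nontrivial element of $T'\subseteq S_m$ has at most $m-1$ orbits, so $a(T')\ge 1$ and hence $|A|\cdot a(T')\ge|A|>a(N)$, which is precisely $\tfrac{1}{a(T/N)}<\tfrac{|A|}{a(N)}$; Theorem~1.6 then gives the asserted asymptotic, and the exponent $\max_{\pi_1}b(k,N(\pi_1))$ appearing there, maximized over $\pi_1$ with $h\circ\pi_1=\pi$, coincides with the maximum over $S(\pi)$ claimed here. If instead $T'=\{e_B\}$, then $T=N$, so $S(\pi)=\{\pi\}$ and the statement is immediate from \cite[Corollary~1.2]{AlbertsOdorney2021}, since $N$ is abelian and $k$ admits a $G$-extension by Shafarevich's theorem ($G$ being a solvable group). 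The only step with any real content is the index computation for the pure element, including the reduction to a pure element of order $p$; I do not anticipate a genuine obstacle, and the role of the pure-element hypothesis is exactly to force $a(N)<|A|$, which is what makes the inequality of Theorem~1.6 hold automatically for any nontrivial abelian normal subgroup $T'\trianglelefteq B$.
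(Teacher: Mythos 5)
Your proposal is correct and follows essentially the same route as the paper: reduce to Theorem 1.6 by noting $G$ is a $p$-group (hence nilpotent) and $T/N\cong T'$ is abelian, then use the pure element to force $a(N)<|A|$ while $a(T/N)\geq 1$, so the inequality $\frac{1}{a(T/N)}<\frac{|A|}{a(N)}$ holds automatically. Your extra refinements (the exact computation $\ind(g)=|A|(1-1/p)$ after passing to a $p$-power, and the separate treatment of $T'=\{e_B\}$) are fine but not needed; the paper simply observes that a pure element fixes $(m-1)|A|$ points, giving $\ind(g)<|A|$.
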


\begin{proof}
Observe that $N = \ker(f)$. Therefore, we have that $N \subseteq T$ and $T/N \cong T'$, which is abelian. Let $g \in G$ be a pure element. Then, $g$ fixes every element in $m - 1$ distinct copies of $A$. In particular, we have $\ind(g) < m|A| - (m - 1)|A| = |A|$. Therefore, $a(N) = a(A^m \cap G) < |A|$, which implies that $\frac{|A|}{a(N)} > 1$. Since $\frac{1}{a(T/N)} \leq 1$, it immediately follows that $\frac{1}{a(T/N)} < \frac{|A|}{a(N)}$. This completes the proof of the corollary. 
\end{proof}

Note that Corollary 3.2 immediately gives a wide class of examples of groups $T \trianglelefteq G$ for which Conjecture 1.1 holds. For example, observe that $A \wr B$ clearly contains a pure element. Thus, letting $G = A \wr B$ in Corollary 3.2, we immediately get the following result: 

\begin{corollary}
Let $k$ be a number field. Let $p \in \mathbb{Z}$ be a prime. Let $A$ be a finite abelian $p$-group, and let $B$ be a transitive permutation group of degree $m$ that is a $p$-group. Let $G = A \wr B = A^m \rtimes_{\varphi} B$. Let $M$ be a proper abelian normal subgroup of $B$, and let $T = A \wr M = A^m \rtimes_{\varphi|_M} M \subset G$. Let $q: G \to G/T$ denote the canonical quotient map. Then, for every $\pi \in q_*\Sur(G_k, G)$, there exists a positive constant $c$ depending on $k, G, T,$ and $\pi$ such that \[
\#\{\psi \in q_*^{-1}(\pi) : |{\disc_G(\psi)}| \le X\} \sim c X^{1/a(N)} (\log X)^{\max_{\pi_1}b(k, N(\pi_1)) -1}
\] where $N = A^m$ and where the maximum is taken over $\pi_1 \in S(\pi)$. 
\end{corollary}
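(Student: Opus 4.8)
The plan is to obtain this statement as a direct specialization of Corollary 3.2, taking $G$ to be the full wreath product $A \wr B$; the only work is then to check that every hypothesis of Corollary 3.2 is met by the choices $N = A^m$, $f \colon G \to B$ the canonical projection, $T' = M$, and $T = f^{-1}(M)$.

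First I would verify that $G = A \wr B = A^m \rtimes_\varphi B$ is an imprimitive permutation group of tower type $(A,B)$: under the tautological inclusion $G \subseteq A \wr B$, the group $G$ surjects onto $B$ via the quotient $A^m \rtimes_\varphi B \to B$, and $A^m \cap G = A^m$ surjects onto $A$ through each of the $m$ coordinate projections. Since $A$ is a finite abelian $p$-group and $B$ is a $p$-group, the hypotheses of Corollary 3.2 on $A$ and $B$ hold, and $N = A^m \cap G = A^m$ coincides with the $N$ named in the statement. Next I would exhibit a pure element: fixing any $a \in A$ with $a \neq e_A$, the element $g = ((a, e_A, \dots, e_A), e_B) \in G$ satisfies $f_0(g) = e_B$, $f_1(g) = a \neq e_A$, and $f_j(g) = e_A$ for $2 \le j \le m$, so $g$ is pure in the sense defined before Corollary 3.2. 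Finally, $f \colon G \to B$ (which is the map $f_0$) has kernel exactly $A^m = N$; $T' := M$ is by hypothesis a proper abelian normal subgroup of $B$; and unwinding the wreath-product action gives $f^{-1}(M) = A^m \rtimes_{\varphi|_M} M = A \wr M = T$, so $N \subseteq T$ and $T/N \cong M$ is abelian.

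With these checks in place, Corollary 3.2 applies verbatim and yields the claimed asymptotic, with exponent $1/a(N)$ and power-of-log exponent $\max_{\pi_1 \in S(\pi)} b(k, N(\pi_1)) - 1$. I do not expect a genuine obstacle here: the corollary is a clean instance of Corollary 3.2. The one point worth an explicit sentence is the matching of semidirect-product conventions, namely that $f^{-1}(M)$ is literally $A^m \rtimes_{\varphi|_M} M$, which is immediate from the definition of the wreath product; and the existence of at least one $G$-extension of $k$ (implicit in Corollary 3.2) is automatic, since $A \wr B$ is a $p$-group, hence nilpotent and solvable, so Shafarevich's theorem applies.
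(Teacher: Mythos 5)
Your proposal is correct and is exactly the paper's route: the paper obtains this corollary by setting $G = A \wr B$ in Corollary 3.2 and observing that the full wreath product contains a pure element, which is precisely the specialization and hypothesis-checking you carry out (in somewhat more detail, including the identification $f^{-1}(M) = A^m \rtimes_{\varphi|_M} M$ and $T/N \cong M$).
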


Observe that Corollary 3.3 immediately implies Corollaries 1.3 and 1.4. 

\subsection{General case}

\begin{proof}[Proof of Theorem 1.7]Observe that the subgroup $A^m \leq A \wr B$ has the induced module structure by definition. In other words, we have \[ A \wr B = \Ind^B_1(A) \rtimes B .\] Now, fix $\pi_1 \in S(\pi) \cap p_*\Sur(G_k, G; X) \subset \Sur(G_k, B)$. Let $F$ be the subfield of $\overline{k}$ fixed by $\pi_1^{-1}(\Stab_B 1)$. Then, $F$ is a degree $m$ extension of $k$ with Galois closure group $B$ and $|{\disc(F/k)}| = |{\disc_B(\pi_1)}| < dX^{1/|A|} $. Moreover, we necessarily have the $G_k$-module isomorphism \[ \Ind_1^B(A)(\pi) = \Ind_F^k(A) \] where $A$ carries the trivial $G_F$ action. Hence, we have an injective homomorphism of $G_k$ modules $N(\pi_1) \hookrightarrow \Ind_F^k(A)$. Now, Corollary 1.14(iii) gives us \[ |H^1_{ur}(k, N(\pi_1))| \ll_{m, |A|} |{\Hom(\Cl_F, A)}| .\] Now, let $A$ be a finite abelian group, and let $r$ denote the rank of $A$. Then, it is well-known that \[ |{\Hom(\Cl_F, A)}| \ll_{\epsilon} |{\disc(F/\mathbb{Q})}|^{r/2 + \epsilon} = (|{\disc(k/\mathbb{Q})}|^m|{\disc(F/k)}|)^{r/2 + \epsilon} \ll_{n, k, \epsilon, A} X^{\frac{r}{2|A|}+ \epsilon} .\] Combining this with (6), we immediately get that \[ \sum_{\pi_1 \in S(\pi) \cap p_*\Sur(G_k, G; X)} |H^1_{ur}(k, N(\pi_1))| \ll_{k, n, \pi, \epsilon} X^{\frac{1}{|A|a(T/N)} + \frac{r}{2|A|} + \epsilon} .\] In particular, if \[ \frac{1}{a(T/N)} + \frac{r}{2} < \frac{|A|}{a(N)} \] then we can use Proposition 2.1 to deduce that Conjecture 1.1 holds for $T \trianglelefteq G$. In other words, we immediately get Theorem 1.7, as desired. 
\end{proof}

Now, we will consider some of the implications of Theorem 1.7. Observe that when $A \cong C_t$ for some $t \in \mathbb{Z}$, the condition in Theorem 1.7 becomes \[ \frac{1}{a(T/N)} + \frac{1}{2} < \frac{|A|}{a(N)} = \frac{t}{a(N)} .\]

\begin{corollary}
Let $k$ be a number field. Let $G$ be an imprimitive permutation group of tower type $(A, B)$, where $A = C_t$ for some $t \in \mathbb{Z}$ and $B$ is a transitive permutation group of degree $m$. Suppose that there is at least one $G$-extension of $k$. Further suppose that $G$ contains a pure element. Let $N = A^m \cap G$. Let $f: G \to B$ denote the surjection onto $B$. Let $T'$ be a proper abelian normal permutation subgroup of $B$ that does not contain any transposition in $S_m$, and let $T = f^{-1}(T')$. Let $q: G \to G/T$ denote the canonical quotient map. Then, for every $\pi \in q_* \Sur(G_k, G)$, there exists a positive constant $c$ depending on $k, G, T, $ and $\pi$ such that \[
\#\{\psi \in q_*^{-1}(\pi) : |{\disc_G(\psi)}| \le X\} \sim c X^{1/a(N)} (\log X)^{\max_{\pi_1}b(k, N(\pi_1)) -1}
\] where the maximum is taken over $\pi_1 \in S(\pi)$.
\end{corollary}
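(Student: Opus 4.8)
The plan is to derive Corollary 3.4 as a direct application of Theorem 1.7, exactly paralleling how Corollary 3.2 was deduced from Theorem 1.6. First I would verify that the hypotheses of Theorem 1.7 are satisfied: since $A = C_t$ is a finite abelian group of rank $r = 1$, and $N = A^m \cap G \trianglelefteq G$ as $G$ has tower type $(A,B)$, the module-theoretic setup is in place. Because $N = \ker(f)$, we get $N \subseteq T = f^{-1}(T')$ and $T/N \cong T'$, which is abelian by hypothesis; together with the assumption that there is at least one $G$-extension of $k$, all the standing hypotheses of Theorem 1.7 hold except possibly the numerical inequality.

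The core of the argument is therefore to check the inequality $\frac{1}{a(T/N)} + \frac{r}{2} < \frac{|A|}{a(N)}$, which since $r = 1$ and $|A| = t$ reads $\frac{1}{a(T/N)} + \frac{1}{2} < \frac{t}{a(N)}$. For the left-hand side: since $T' \cong T/N$ is a nontrivial permutation subgroup of $S_m$, we have $a(T/N) \geq 1$, so $\frac{1}{a(T/N)} \leq 1$; but the hypothesis that $T'$ contains no transposition means every nonidentity element of $T'$ has index at least $2$ in $S_m$ (a nonidentity permutation has index $1$ precisely when it is a transposition), so in fact $a(T/N) \geq 2$, giving $\frac{1}{a(T/N)} + \frac{1}{2} \leq 1$. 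For the right-hand side: since $G$ contains a pure element $g$, that element fixes all $|A|$ coordinates in at least $m-1$ of the $m$ blocks, so $\ind(g) < m|A| - (m-1)|A| = |A| = t$; hence $a(N) = a(A^m \cap G) \leq \ind(g) < t$ (noting $g \in N$), which gives $\frac{t}{a(N)} > 1$. Combining, $\frac{1}{a(T/N)} + \frac{1}{2} \leq 1 < \frac{t}{a(N)}$, so the inequality of Theorem 1.7 holds.

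With the numerical hypothesis verified, Theorem 1.7 applies directly and yields exactly the stated asymptotic $\#\{\psi \in q_*^{-1}(\pi) : |{\disc_G(\psi)}| \le X\} \sim c X^{1/a(N)} (\log X)^{\max_{\pi_1}b(k, N(\pi_1)) -1}$, with the maximum over $\pi_1 \in S(\pi)$, completing the proof.

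I do not anticipate a genuine obstacle here, as this is a packaging corollary; the only subtle point worth stating carefully is the implication ``$T'$ contains no transposition'' $\Rightarrow$ ``$a(T/N) \geq 2$'', which rests on the elementary fact that an element of $S_m$ has index exactly $1$ if and only if it is a transposition — this should be stated explicitly rather than left implicit, since it is precisely the ``no transposition'' hypothesis that upgrades the bound $\frac{1}{a(T/N)} \leq 1$ to the strict-enough $\frac{1}{a(T/N)} + \frac{1}{2} \leq 1$ needed to absorb the rank contribution $\frac{r}{2} = \frac12$ coming from the Brauer--Siegel-type bound on $|\Hom(\Cl_F, A)|$ used in the proof of Theorem 1.7.
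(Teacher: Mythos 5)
Your proposal is correct and follows essentially the same route as the paper: verify $N=\ker(f)\subseteq T$ with $T/N\cong T'$ abelian, use the pure element to get $a(N)<|A|=t$ and the no-transposition hypothesis to get $a(T/N)\geq 2$, then apply Theorem 1.7. Your explicit remark that a nonidentity permutation has index $1$ exactly when it is a transposition is a small but welcome elaboration of a step the paper leaves implicit.
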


\begin{proof}
Since $A \cong C_t$, it follows that $r = \rk A = 1$. Let $g \in G$ be a pure element. Then, $g$ fixes every element in $m - 1$ distinct copies of $A$. Thus, we have $\ind(g) < m|A| - (m - 1)|A| = |A|$. Therefore, $a(N) = a(A^m \cap G) < |A|$, which implies that $\frac{|A|}{a(N)} > 1$. Let $T'$ be a proper abelian normal permutation subgroup of $B$ that does not contain any transposition in $S_m$. Then, $a(T') \geq 2$. Let $f: G \to B$ denote the surjection onto $B$, and let $T = f^{-1}(T')$. Then, it follows that $N = \ker(f) \subset T$ and $T/N \cong T'$, which is abelian. In particular, since $a(T/N) \geq 2$ and $\frac{|A|}{a(N)} > 1$, we have $\frac{1}{a(T/N)} + \frac{1}{2} < \frac{|A|}{a(N)}$. 
\end{proof}

Observe that Corollary 3.4 gives a wide class of examples of groups $T \trianglelefteq G$ for which Conjecture 1.1 holds. Observe that $A \wr B$ clearly contains a pure element. Moreover, if $|B|$ is odd, then $B$ clearly cannot contain any transposition in $S_m$. Note that it follows from the Feit-Thompson theorem that every finite group of odd order is solvable. Thus, if $|B|$ is odd, then $B$ is solvable. In this case, $A = C_t$ is abelian and $B$ is solvable, so $G$ is solvable. Then, it follows from Shafarevich's theorem that there exists a $G$-extension of $k$ \cite{SW98}. Therefore, we get the following class of examples (which is very far from being exhaustive): 

\begin{corollary}
Let $k$ be a number field. Let $r \in \mathbb{Z}$, and let $B$ be a transitive permutation group of degree $m$ such that $|B|$ is odd. Let $G 
= C_r \wr B = C_r^m \rtimes_{\varphi} B$. Let $M$ be a proper abelian normal subgroup of $B$, and let $T = C_r \wr M = C_r^m \rtimes_{\varphi|_M} M \subset G$. Let $q: G \to G/T$ denote the canonical quotient map. Then, for every $\pi \in q_* \Sur(G_k, G)$, there exists a positive constant $c$ depending on $k, G, T, $ and $\pi$ such that \[
\#\{\psi \in q_*^{-1}(\pi) : |{\disc_G(\psi)}| \le X\} \sim c X^{1/a(N)} (\log X)^{\max_{\pi_1}b(k, N(\pi_1)) -1}
\] where $N = C_p^m$ and where the maximum is taken over $\pi_1 \in S(\pi)$. 
\end{corollary}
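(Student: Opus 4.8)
We deduce the statement directly from Corollary 3.4; the plan is simply to verify its hypotheses for $G = C_r \wr B$ when $|B|$ is odd. First I would record the relevant solvability facts. Since $|B|$ is odd, the Feit--Thompson theorem shows that $B$ is solvable, so Shafarevich's theorem \cite{SW98} produces a $B$-extension of $k$; thus $(C_r, B)$ is an admissible tower type in the sense of the definition opening Section 3, and $G = C_r \wr B = C_r^m \rtimes_\varphi B$ is an imprimitive permutation group of tower type $(C_r, B)$, since $C_r^m$ surjects onto $C_r$ through each of the $m$ coordinate projections and $G$ surjects onto $B$. Moreover $G$, being an extension of the solvable group $B$ by the abelian group $C_r^m$, is itself solvable, so Shafarevich's theorem again gives at least one $G$-extension of $k$ — the remaining standing hypothesis of Corollary 3.4.

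Next I would check the structural hypotheses. Here $A = C_r$ has rank $1$, and $G$ contains a pure element, for instance $g = ((a, e_A, \dots, e_A), e_B)$ with $a \neq e_A$, since then $f_0(g) = e_B$, $f_1(g) = a \neq e_A$ and $f_j(g) = e_A$ for all $j \neq 1$. Writing $f: G \to B$ for the projection onto $B$, we have $N = \ker f = C_r^m = A^m \cap G$, and — using the convention of the Note — $T = C_r \wr M = C_r^m \rtimes_{\varphi|_M} M$ is precisely the set of $((a_1, \dots, a_m), b) \in G$ with $b \in M$, i.e. $T = f^{-1}(M)$. Taking $T' = M$, this is exactly the setup of Corollary 3.4 once we know that $M$, viewed as a permutation subgroup of $S_m$, contains no transposition; but this is immediate because $|B|$ is odd, so no element of $B \leq S_m$ (in particular no element of $M$) has order $2$.

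With all hypotheses of Corollary 3.4 verified, that corollary yields the claimed asymptotic with $N = C_r^m$ and the maximum taken over $\pi_1 \in S(\pi)$. I do not expect any real obstacle here: all of the analytic content is already packaged in Corollary 3.4 (hence in Theorem 1.7 and Proposition 2.1), and the only point that requires a moment of care is the identification $C_r \wr M = f^{-1}(M)$ together with keeping careful track of the intended permutation-group structures — exactly the subtlety flagged in the Note following Corollary 1.3.
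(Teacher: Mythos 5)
Your proposal is correct and follows essentially the same route as the paper: the paper likewise deduces Corollary 3.5 from Corollary 3.4 by noting that $A \wr B$ contains a pure element, that $|B|$ odd rules out transpositions in $M \leq B \leq S_m$, and that Feit--Thompson plus Shafarevich supply the required $G$-extension (and $B$-extension) of $k$. Your explicit identification $T = f^{-1}(M)$ with $T' = M$ and the correction $N = C_r^m$ (the paper's statement has a typo $C_p^m$) match the intended argument.
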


Observe that Corollary 3.5 immediately gives us Corollary 1.5, as desired.


\begin{thebibliography}{9}



\bibitem[Alb21]{Alberts2021}
B.~Alberts.
\newblock Statistics of the first Galois cohomology group: A refinement of Malle’s conjecture.
\newblock {\em Algebra and Number Theory}, 15(10):2513--2569, 2021.

\bibitem[ALOW+25]{AlbertsLemkeOliverWangWood2025}
B.~Alberts, R.~J. Lemke~Oliver, J.~Wang, and M.~M.~Wood.
\newblock Inductive methods for counting number fields.
\newblock arXiv:2501.18574 [math.NT], 2025.

\bibitem[AO21]{AlbertsOdorney2021}
B.~Alberts and E.~O'Dorney.
\newblock Harmonic analysis and statistics of the first Galois cohomology group.
\newblock {\em Research in the Mathematical Sciences}, 8(3), 2021.

\bibitem[Bha05]{Bhargava2005}
M.~Bhargava,
\newblock The density of discriminants of quartic rings and fields,
\newblock \emph{Annals of Mathematics}, Second Series, \textbf{162}(2):1031--1063, 2005.

\bibitem[Bha10]{Bhargava2010}
M.~Bhargava,
\newblock The density of discriminants of quintic rings and fields,
\newblock \emph{Annals of Mathematics}, Second Series, \textbf{172}(3):1559--1591, 2010.


\bibitem[DH71]{DH71}
H.~Davenport and H.~Heilbronn.
\newblock On the density of discriminants of cubic fields. II.
\newblock {\em Proceedings of the Royal Society A}, 322(1551):405--420, 1971.

\bibitem[DW88]{DW88}
B.~Datskovsky and D.~Wright.
\newblock The adelic zeta function associated to the space of binary cubic forms.
\newblock {\em Annals of Mathematics}, 127(3): 695--740, 1988.

\bibitem[FLN18]{FreiLoughranNewton2018}
C.~Frei, D.~Loughran, and R.~Newton,
\newblock The Hasse norm principle for abelian extensions,
\newblock \emph{American Journal of Mathematics}, \textbf{140}(6):1639--1685, 2018.

\bibitem[K{\"o}n18]{Koenig18}
J.~K{\"o}nig.
\newblock A note on families of monogenic number fields.
\newblock {\em Kodai Math. J.}, 41(2):456--464, 2018.

\bibitem[Mal02]{Malle2002}
G.~Malle.
\newblock On the distribution of Galois groups.
\newblock {\em Journal of Number Theory}, 92(2):315--329, 2002.

\bibitem[Mal04]{Malle2004}
G.~Malle.
\newblock On the distribution of Galois groups, II.
\newblock {\em Experimental Mathematics}, 13(2):129--135, 2004.

\bibitem[MTT23]{McGownThorneTucker23}
K.~J.~McGown, F.~Thorne, and A.~Tucker.
\newblock Counting quintic fields with genus number one.
\newblock arXiv:2006.12991v2 [math.NT], July 28, 2023.





\bibitem[SW98]{SW98}
A.~Schmidt and K.~Wingberg.
\newblock Shafarevich's theorem on solvable groups as Galois groups.
\newblock arXiv:math/9809211 [math.NT], 1998.



\bibitem[Woo10]{Wood2010}
M.~M.~Wood,
\newblock On the probabilities of local behaviors in abelian field extensions,
\newblock \emph{Compositio Mathematica}, \textbf{146}(1):102--128, 2010.





\end{thebibliography}
\end{document}